\documentclass[11pt]{amsart}

\usepackage{amsmath,amssymb,amsthm,mathtools}
\usepackage{booktabs}
\usepackage[margin=1.2in]{geometry}
\usepackage{enumitem}
\usepackage[colorlinks=true, linkcolor=blue, citecolor=blue, urlcolor=blue]{hyperref}

\newtheorem{theorem}{Theorem}[section]
\newtheorem{lemma}[theorem]{Lemma}
\newtheorem{proposition}[theorem]{Proposition}
\newtheorem{corollary}[theorem]{Corollary}

\theoremstyle{definition}
\newtheorem{definition}[theorem]{Definition}
\newtheorem{example}[theorem]{Example}
\newtheorem{observation}[theorem]{Observation}

\theoremstyle{remark}
\newtheorem{remark}[theorem]{Remark}

\numberwithin{equation}{section}

\newcommand{\dd}{\delta}
\newcommand{\N}{\mathbb{N}}
\newcommand{\Z}{\mathbb{Z}}

\newcommand{\PP}{\mathbb{P}}
\newcommand{\eps}{\varepsilon}
\newcommand{\law}{\mathcal{L}}
\newcommand{\Sub}{\operatorname{Sub}}

\newcommand{\rev}[1]{\overleftarrow{#1}}
\newcommand{\mir}[1]{\overline{#1}}
\newcommand{\lz}{\ell_{0}}
\newcommand{\lo}{\ell_{1}}
\newcommand{\sub}{\trianglelefteq}
\newcommand{\nsub}{\ntrianglelefteq}
\DeclareMathOperator{\dens}{dens}
\newcommand{\mclE}{\mathcal{E}}

\begin{document}

\title[ The equality cases of the median property for $P_t$]{The equality cases $P_t(\mathbb{N})=\tfrac12$ for the deconvolved  sum-of-digits measures}

\author{Dawid Tar{\l}owski}
\address{Instytut Matematyki, Wydzia{\l} Matematyki i Informatyki, Uniwersytet Jagiello\'nski, ul.\ {\L}ojasiewicza 6, 30-348 Krak\'ow, Poland}
 \email{dawid.tarlowski@uj.edu.pl ; dawid.tarlowski@gmail.com}

\subjclass[2020]{Primary 11A63; Secondary 60G40, 05A05, 05A20}
\keywords{Binary trees, Binary sum of digits, Cusick's conjecture, stopped random walks, }

\begin{abstract}
Let $s(n)$ denote the number of ones in the binary expansion of an integer $n\in\N$, and let $\mu_t$ be the probability measure on $\Z$ defined by the asymptotic densities of the level sets of the function $\N\ni n\mapsto s(n+t)-s(n)\in\mathbb{Z}$. Let $P_t$ be the family of finitely supported measures defined by the convolution $\mu_t=\mu_1*P_t$. Recently, Tarlowski (2026) has shown that the family $P_t$ may be represented as a recursively grown binary tree $T_t$, and that the Cusick's conjecture - $\mu_t(\N)>\frac12$, $t\in\N$, - follows from the asymmetry property of the family $T_t$, which was posed there as an open
problem. Next, Cheng (2026) has provided the combinatorial description of the family $T_t$ in the language of principal subsequence ideals, and proved both conjectures. Both of these problems are directly related to the problem of determining
the zeros of the function $\N\ni t \mapsto P_t(\N)-\frac12\in[0,\tfrac12]$,
a problem left open by Cheng (2026) as a saturation problem, and previously analyzed only numerically. In this paper we solve this problem completely. Writing an odd integer $t\ge3$ as $t=(1\,w\,1)_2$
with $w\in\{0,1\}^{\star}$, we show that $P_t(\N)=\frac12$ if and only if $w$ is \emph{saturated} in the following sense: in the block decomposition
$w=1^{a_0}\,0\,1^{a_1}\,0\cdots0\,1^{a_k}$ with exactly $k$ zeros, every block of "1"
satisfies $a_i\ge k$. Additionally, we show that the lower bound for $P_t(\N)$ established by Cheng for $0$-initial words holds true for all non-saturated words.

% , leaving open the problem of characterizing the equality cases $P_t(\N)=\frac12$. 

\end{abstract}

\maketitle

\section{Introduction}\label{sec:intro}

Let $s(n)$ denote the number of ones in the binary expansion of a natural number
$n\in\N=\{0,1,2,\dots\}$. For $t\in\N$ and $d\in\Z$, let
\begin{equation}\label{eq:mut}
\mu_t(d)\;=\;\dens\{n\in\N:\; s(n+t)-s(n)=d\}
\end{equation}
denote the asymptotic density of the corresponding level set. The densities
exist \cite{Besineau}, each $\mu_t$ is a probability measure on $\Z$ with mean
zero, and the family satisfies the recurrences
\begin{equation}\label{eq:murec}
\mu_{2t}=\mu_t,\qquad
\mu_{2t+1}=\tfrac12\,\sigma_{-1}\mu_{t+1}+\tfrac12\,\sigma_{1}\mu_{t},
\end{equation}
where $\sigma_d$ denotes the shift by $d$. Paper \cite{Tarlowski} studies the measures $\mu_t$ by considering the family $P_t$ defined by the recurrence
\eqref{eq:murec} with the initial measure $P_1=\dd_0$.
Equivalently,
$$\mu_t=\mu_1\ast P_t.$$
 The family $P_t$ is represented therein by a family of recursively grown binary trees $T_{t}$, namely,
$P_t$ is the probability distribution of the stopped random walk defined by
$T_t$, where $T_1=\bullet$, $T_3=[\bullet,\bullet]$, and, given
$T=T_{(1t_1t_2\dots t_n1)_2}=[T^-,T^+]$,
$$T_{(1t_1t_2\dots t_n01)_2}=[T,T^+]\mbox{ and }T_{(1t_1t_2\dots t_n11)_2}=[T^-,T].$$

Paper \cite{Tarlowski} conjectures that once the tree begins
to grow, exactly one side of it becomes heavier than $\frac12$, and it remains
so during the whole growth process; in the language of binary digits: for
every $n\ge1$ and $t_2,\dots,t_n\in\{0,1\}$,
\begin{equation}\label{TT}P_{(10t_2\dots t_n1)_2}(\mathbb{N})>\frac12\mbox{ and }P_{(11t_2\dots t_n1)_2}(-\mathbb{N})>\frac12.\end{equation}
By Lemma 25 from \cite{Tarlowski}, the  Cusick's conjecture (\cite{DKS,EmmeHubert,Spiegelhofer,SW,SS,Cheng}) which states that $\mu_t(\N)>\tfrac12$ for every $t\in\N$, is a special case of the above tree asymmetry  - for
every $t_1,\dots,t_n\in\{0,1\}$ and every $k\ge \lo(t_1\dots t_n)+2$, where
$\lo(u)$ denotes the number of ones in the word $u$, we have
\begin{equation}\label{ll}\mu_{(1t_1\dots t_n1)_2}(\N)=P_{(1t_1\dots t_n10^k1)_2}(\N)=P_{(10^k1t_n\dots t_1 1)_2}(\mathbb{N}),
\end{equation}
which directly implies:
\begin{equation}\label{M1}
\bigl\{\mu_t(\N)\colon t\in\N\bigr\}\;\subset\; \bigl\{P_t(\N)\colon t\in T\bigr\},
\qquad T:=2\N+1,
\end{equation}
and
\begin{equation}\label{M2}
\mbox{ if }\ \{t\in T\colon t_2=0\}\subset \{t\colon P_t(\N)>\tfrac12\}\ \mbox{ then }\ \{t\colon \mu_t(\N)>\tfrac12\}=\N,
\end{equation}
where $t_2$ denotes the second digit of the binary representation
$t=(t_1t_2\dots t_n)_2$. Relations \eqref{ll} and \eqref{M2} show that both
conjectures are questions about the elements of the following set
\begin{equation}\label{VV}\{t\in\N\colon P_t(\N)>\tfrac12\}.
\end{equation}
%Motivated by \eqref{M1} and \eqref{M2}, paper \cite{Tarlowski} presents
%numerical experiments concerning the global minima of $V$. While the
%numerical studies confirmed Conjecture 24, the general pattern for finding
%the set of minimizers
%$$\{t\in T\colon V(t)=\tfrac12\}$$ was not apparent.

 Paper \cite{Cheng} has provided the combinatorial description of the
family of trees $\{T_t\colon t\in \N\}$: the internal nodes of the tree
$T_{(1w1)_2}$ are labelled by the scattered subwords of the reversed word
$\rev{w}$. This provides the explicit formula for the stopping time $\tau_w$ determined by the tree $T_{(1w1)_2}$. Let
$u\sub a$ denote that $u$ is a subword of $a$. The stopping time $\tau_w$ determined by the tree $T_{(1w1)_2}$
is defined by the following first-exit time:
$$\tau_w(\omega)\;=\;\inf\{n\ge1\colon\ \omega_1\dots\omega_n\nsub\rev{w}\},\ \ \ \omega=(\omega_1,\omega_2,\dots)\in \{0,1\}^{\N^+}.$$
Moreover, the above formula leads to an explicit description for the measures $P_t$. Given a random walk on a tree $T_w$, the  walk exits the set of internal nodes when it enters the set of leaves of the tree, see Corollary \ref{cor:explicitP} below for more details, and hence:

$$P_w\;=\;\sum_{v\in\Lambda(w)}\Bigl(\tfrac12\Bigr)^{\ell(v)}\dd_{b(v)},$$
where $\Lambda(w)$ denotes the set of the labeled leaves of the tree - a word $v=v_1\dots v_{\ell(v)}$ labels a leaf of $T_w$ iff
$v_1\dots v_{\ell(v)-1}\sub\rev{w}$ and $v\nsub\rev{w}$. Above, $\ell(v)$ is the
length of $v$, and $b(v)=\ell_1(v)-\ell_0(v)$ is the position of the walk at the end of the path $v$. Paper \cite{Cheng} proves the following median property
 
\begin{equation}\label{eq:median}
P_t(\N)\ \ge\ \tfrac12
\qquad\text{and}\qquad
P_t(-\N)\ \ge\ \tfrac12
\qquad (t\ge1),
\end{equation}
conjectured already in \cite{DKS}, and the tree asymmetry \eqref{TT}, which directly forces Cusick conjecture . From now on, a digit word
$w\in\{0,1\}^{\star}$ will be identified with the odd integer
$w=(1\,w\,1)_2$, and we write $P_w:=P_{(1w1)_2}$ (see
Section~\ref{sec:prelim}). Paper \cite{Cheng} proved that
\begin{equation}\label{eq:cheng55}
P_{w}(\N)\;\ge\;\frac12+2^{-2\min(\lz(w),\,\lo(w)+1)-1},
\qquad\text{whenever $w$ begins with the digit $0$}
\end{equation}
 together with the complementary bound
for words beginning with $1$ on the $-\N$ side. The problem of characterization of the set $\{t\in\N\colon P_t(\N)=\frac12\}$ (the saturation problem) was left open in \cite{Cheng};  in \cite{Tarlowski}, this problem was analyzed numerically and the formula for the elements of this set, based on finite number of cases, was far from obvious. The main goal of this paper is to provide the exact characterization of odd integers $t$ with $P_t(\N)=\frac12$. First, note that
that every $w\in\{0,1\}^{\star}$ with $k=\lz(w)$ zeros can be written
uniquely as
\begin{equation}\label{eq:profile}
w\;=\;1^{a_0}\,0\,1^{a_1}\,0\,\cdots\,0\,1^{a_k},
\qquad a_0,\dots,a_k\in\N,
\end{equation}
where $a_i=0$ is allowed; we call $(a_0,\dots,a_k)$ the \emph{block profile}
of $w$. For $k=0$ the decomposition \eqref{eq:profile}
reads $w=1^{a_0}$, including the empty word $\eps$ (for which $a_0=0$).
\begin{definition}\label{def:saturated}
A word $w\in\{0,1\}^{\star}$ with $k=\lz(w)$ and block profile
$(a_0,\dots,a_k)$ is called \emph{saturated} if
\[
a_i\;\ge\;k\qquad\text{for every }0\le i\le k.
\]
Equivalently: every zero of $w$ is isolated, and every maximal
block of ones of $w$ --- including the first and the last one --- has length at
least $\lz(w)$. In particular, every word without the digit $0$ (in particular $\eps$) is
saturated.
\end{definition}
% We write
%$-\N=\{0,-1,-2,\dots\}$, and $\mir{w}$ for the \emph{complement} of $w$, the word obtained from $w$
%by interchanging the digits $0$ and $1$.
 Our main result is the following. 
\begin{theorem}\label{main}
Let $w\in\{0,1\}^{\star}$ and $t=(1\,w\,1)_2$. Then
\[
P_t(\N)=\dfrac12 \quad\text{ if and only if }\quad w \text{ is saturated.}
\]
\end{theorem}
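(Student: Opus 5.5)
Using the leaf description of Corollary~\ref{cor:explicitP}, I would write $P_w(\N)-\tfrac12$ as an explicit sum over leaves and show that it vanishes exactly for saturated $w$. Set $u=\rev{w}$; saturation is invariant under reversal, because the block profile simply reverses. Let $\omega$ be a fair coin sequence with walk $S_n=\lo(\omega_1\dots\omega_n)-\lz(\omega_1\dots\omega_n)$. Then $P_w$ is the law of $S_{\tau}$, where $\tau=\tau_w$ is the first time the prefix stops being a subword of $u$. By greedy subsequence matching, the prefix $\omega_1\dots\omega_n$ is a subword of $u$ iff the greedy embedding pointer stays inside $u$. So $\tau$ is a first-exit time of a finite-state Markov chain: the state is the pointer position $j$ in $u$, and from there the digit $x$ moves to the next occurrence of $x$ after $j$, or exits if there is none. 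Since $\tau\le |w|+1$ is bounded, $S_\tau$ has mean $0$.

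Two reformulations come next.

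\emph{Step 1.} I would decompose $P_w(\N)-\tfrac12 = \tfrac12\bigl(P_w(\{0\})-P_w(\Z\setminus \N)+P_w(\N^+)\bigr)-\dots$ and use the median property~\eqref{eq:median} in its natural form. $P_w(\N)=\tfrac12$ is equivalent to $\PP(S_\tau\ge0)=\tfrac12$. Combined with $\PP(S_\tau\le 0)\ge\tfrac12$, this means $\PP(S_\tau=0)=\PP(S_\tau>0)$ fails in a specific way. Concretely, I would rederive Cheng's inequality by a coupling or reflection: pair each leaf ending at level $b\ge0$ with its mirror path. The saturation case is then the case where every step of this injection is tight.

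\emph{Step 2.} This is the core of the argument. I would build an explicit injection $\Phi$ from leaves with $b(v)<0$ into leaves with $b(v)\ge0$, weight-preserving in the sense $\ell(\Phi v)=\ell(v)$. The natural candidate flips the digits after the last time the walk was at $0$ (reflection). The reflected word may fail to be a valid leaf, either by exiting too early or by still being a subword. The defect $P_w(\N)-\tfrac12$ then equals $\sum 2^{-\ell}$ over leaves $\ge0$ not in the image of $\Phi$. So I need to show that this set is empty iff $w$ is saturated.

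For the \emph{if} direction, I would use the structure $u=1^{a_k}01^{a_{k-1}}0\cdots01^{a_0}$ with all $a_i\ge k$ and isolated zeros. Any prefix containing $m\le k$ zeros is embeddable as long as each run of ones between its zeros has length at most $a_i$. Exit happens either via a $0$ after $k$ zeros, or via a run of ones that is too long. Since the run lengths are at least $k\ge$ the number of zeros used, one exits through a run before the walk can go negative deep. I would check directly, by computing the exit distribution level by level (a generating-function computation over the block profile), that $\PP(S_\tau\ge0)=\tfrac12$.

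For the \emph{only if} direction, I would exhibit an explicit witness. If some zero is not isolated, or some block $a_i<k$, pick the leftmost such violation. Then construct a leaf $v$ with $b(v)\ge 0$, for example $v=1^{a_i+1}$ preceded by an appropriate embedded prefix of zeros. Its mirror path exits at a different time, so $v$ is not in the image of $\Phi$. This gives the strict inequality, with a lower bound in the spirit of~\eqref{eq:cheng55}.

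The main obstacle is making the injection $\Phi$ well-defined for general $u$ and proving that its failure to be surjective is exactly the non-saturation condition. I would first try the reflection at the last zero-visit. If that breaks, I would instead try induction on $|w|$ via the tree recursion $T_{w0}=[T_w,T_w^+]$, $T_{w1}=[T_w^-,T_w]$. Under this recursion $P_{w0}=\tfrac12\sigma_{-1}P_w+\tfrac12\sigma_1P_{w}^{+}$, so the equality $P(\N)=\tfrac12$ propagates only when boundary masses $P(\{0\}),P(\{-1\})$ of the subtrees take specific values. Tracking these masses along the block profile would give the condition $a_i\ge k$.
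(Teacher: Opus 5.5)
There is a genuine gap: the mechanism of your Step~2 cannot work as formulated, and the sufficiency direction is only asserted.

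\textbf{The injection $\Phi$ does not exist.} A length-preserving injection $\Phi$ from leaves with $b(v)<0$ into leaves with $b(v)\ge0$ fails even for saturated words. Take $w=1$ ($t=7$).
\begin{itemize}
\item Here $\Sub(\rev{w})=\{\eps,1\}$, so the leaves are $0,10,11$, and $P_7=\tfrac12\dd_{-1}+\tfrac14\dd_0+\tfrac14\dd_2$. Thus $P_7(\N)=\tfrac12$.
\item The only negative leaf has length $1$, while both nonnegative leaves have length $2$.
\item Your reflection at the last visit to $0$ sends the leaf $0$ to $1$, which is an internal node, not a leaf.
\end{itemize}
The same happens for every $w=1^j$ with $j\ge1$. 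So the criterion ``no nonnegative leaf is missed by $\Phi$ iff $w$ is saturated'' is false as stated. A repair would have to match one leaf with several longer leaves of equal total weight, and you give no rule for that. Even with a weight-preserving pairing, the defect $P_w(\N)-\tfrac12$ is half the unmatched mass, not all of it.

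\textbf{The other steps are not arguments.}
\begin{itemize}
\item The ``if'' direction rests on an unspecified generating-function computation.
\item Your description of exits is inaccurate for prefixes with fewer than $k$ zeros. A run of ones can embed across several blocks of $\rev{w}$ (e.g.\ $11\sub101$ with $w=101$ saturated), so runs are not governed by individual $a_i$.
\item The fallback induction via $T_{w0}=[T_w,T_w^+]$ requires boundary masses of all subtrees $T_w^{\pm}$, which is the original problem again.
\end{itemize}

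\textbf{The missing idea is to count surviving internal nodes rather than leaves.} The running mass $\PP(S^w_n\ge0)$ drops only through internal nodes at position $0$ at even times. It rises only through internal nodes at $-1$ at odd times. Telescoping gives
$P_w(\N)=\tfrac12+\sum_{m\ge1}2^{-2m-1}\Delta^-_m(w)$, with $\Delta^-_m=2|B^{-1}_{2m-1}(w)|-|B^0_{2m}(w)|$. Here $B^x_n(w)$ is the set of subwords of $w$ of length $n$ and balance $x$.

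Now double count two sets:
\begin{itemize}
\item pairs consisting of $u\in B^0_{2m}$ and a marked $1$ in $u$;
\item pairs consisting of $v\in B^{-1}_{2m-1}$ and a slot of $v$ whose $1$-insertion stays in $\Sub(w)$.
\end{itemize}
This gives $m\Delta^-_m(w)=F_1(2m,0)$, the number of insertions that fail. Hence $P_w(\N)=\tfrac12$ iff inserting a $1$ into any slot of any subword $v$ with $\lz(v)=\lo(v)+1$ yields a subword of $w$.

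Both directions are then short.
\begin{itemize}
\item If $w$ is saturated, it contains every word with at most $k$ zeros and at most $k$ ones, so every such insertion succeeds.
\item If $w$ is non-saturated, there is an explicit deficient $v$. When $\lo(w)\ge k-1$, overfill a block with $a_r<k$. Otherwise take $v$ with all ones of $w$ and $\lo(w)+1$ zeros, so any $1$-insertion exceeds $\lo(w)$.
\end{itemize}
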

Additionally, our Corollary~\ref{cor:main-ii} extends the lower bound \eqref{eq:cheng55} to all non-saturated words, that is, if $w$ is non-saturated, then we have
\[
P_{w}(\N)\;\ge\;\frac12+2^{-2\min(\lz(w),\,\lo(w)+1)-1}
\]

In terms of the binary representation, Theorem~\ref{main}
reads as follows: writing $t=(1\,w\,1)_2$, the parameters with $P_t(\N)=\frac12$ are
exactly the odd $t$ whose binary expansion has $k\ge0$ zeros, all of them
isolated, with every interior maximal block of ones of length at least $k$ and
the two outer blocks of ones of length at least $k+1$ (the outer blocks of the
expansion of $t$ carry the two flanking ones of $(1\,w\,1)_2$ in addition to the
outer blocks of $w$).
The saturated family has a transparent enumerative structure.
\begin{corollary}\label{cor:count}
For $\ell\ge0$, the number $E(\ell)$ of saturated words of length $\ell$
equals
\[
E(\ell)\;=\;\sum_{k\ge0}\binom{\ell-k(k+1)}{k},
\]
with the convention $\binom{n}{k}=0$ for $n<k$. The summand indexed by $k$ is
nonzero if and only if $\ell\ge k(k+2)$; the unique shortest saturated word
with $k$ zeros is the periodic word $1^{k}(0\,1^{k})^{k}$ of length
$k(k+2)$. Additionally, it is rather easy to note that the asymptotic density of the set $$\{t\in\N\colon P_t(\N)=\frac12\}$$ equals zero.
\end{corollary}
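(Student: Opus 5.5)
The plan is to prove Corollary~\ref{cor:count} by a direct bijective count, using only Definition~\ref{def:saturated}; Theorem~\ref{main} enters only at the density remark, to translate it into a statement about saturated words.

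\textbf{Counting.} Fix $k\ge0$. By \eqref{eq:profile}, a word of length $\ell$ with exactly $k$ zeros corresponds bijectively to its block profile $(a_0,\dots,a_k)\in\N^{k+1}$ with $a_0+\dots+a_k=\ell-k$. The word is saturated iff $a_i\ge k$ for all $i$. Substitute $b_i=a_i-k\ge0$; this gives a bijection onto nonnegative solutions of
\[
b_0+\dots+b_k=\ell-k-k(k+1)=\ell-k(k+2).
\]
By stars and bars, the number of solutions is
\[
\binom{\ell-k(k+2)+k}{k}=\binom{\ell-k(k+1)}{k}.
\]
It is nonzero exactly when $\ell-k(k+2)\ge0$, that is, $\ell\ge k(k+2)$. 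For $k=0$ the count is $1$ (the word $1^\ell$). If $\ell<k(k+2)$, then $\ell-k(k+1)<k$, so under the stated convention the binomial coefficient vanishes, consistent with there being no solutions. Summing over $k$ gives the formula for $E(\ell)$. At $\ell=k(k+2)$ the unique solution is $b\equiv0$, so every $a_i=k$. That profile is the word $1^k(01^k)^k$, which is therefore the unique shortest saturated word with $k$ zeros.

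\textbf{Density.} By Theorem~\ref{main}, an odd $t$ with binary length $L=\ell+2$ satisfies $P_t(\N)=\tfrac12$ iff its middle word $w$ is saturated. For even $t$, use $\mu_{2t}=\mu_t$ and the analogous recursion $P_{2t}=P_t$. Then the set of $t<2^{L}$ in question is covered by the $t=2^j t'$ with $t'$ odd and saturated of length at most $L$.

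It therefore suffices to show that $E(\ell)/2^{\ell}$ decays fast enough. Since $k\le\sqrt{\ell}$ whenever the summand is nonzero,
\[
E(\ell)\le(\sqrt{\ell}+1)\binom{\ell}{\lfloor\sqrt{\ell}\rfloor}\le(\sqrt{\ell}+1)\,\ell^{\sqrt{\ell}}=2^{O(\sqrt{\ell}\log\ell)}=o(2^{\ell}).
\]
Summing over the $O(L)$ possible powers $2^j$ still gives $o(2^{L})$, and checking every $N$ (not just $N=2^L$) is routine, so the density is zero.

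The main obstacle is the even-$t$ reduction. If $P_{2t}=P_t$ is not intended by the paper (it defines $P_t$ through \eqref{eq:murec}, which does give $P_{2t}=P_t$), then one simply restricts to odd $t$, and the same estimate applies.
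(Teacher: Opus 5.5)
Your proof is correct. The paper gives no written proof of this corollary, and your argument is the routine one it leaves implicit: the shift $b_i=a_i-k$ with stars and bars gives $\binom{\ell-k(k+1)}{k}$, the all-$k$ profile at $\ell=k(k+2)$ gives the shortest word, and the bound $E(\ell)=2^{O(\sqrt{\ell}\log\ell)}$ with the reduction $t=2^jt'$ gives density zero. Your hedge about even $t$ is unnecessary, since $P_{2t}=P_t$ is part of the definition \eqref{eq:Pt} (and $t'=1$ is excluded anyway because $P_1=\dd_0$).
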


\section{Notation and preliminaries}\label{sec:prelim}

\subsection{Odd integers as digit words}\label{subsec:order}

We write $\N=\{0,1,2,\dots\}$, and the binary representation of an integer $t\in\N$ will be denoted by $(t_1\dots t_n)_2$. 
Given a nonempty set $A$, the $A^\star$ will denote the set of all finite words with letters from $A$:
$$A^\star=\bigcup_{n\in\N}A^{n}\ (A^0:=\{\eps\},\ \eps -\mbox{ an empty word}).$$
Let
$$T=2\N+1.$$

 The following bijection identifies the set $\{0,1\}^\star$, with the  set  $T_{\{\geq 3\}}=\{t\in T\colon t\geq 3\}$:
$$h\colon \{0,1\}^\star\ni w  \mapsto (1w1)_2\in T_{\{\geq 3\}}.$$ 

The encoding used in  \cite{Tarlowski} is analogous: given the two mappings $L,R\colon T\to T$ defined by
\[
L(t)=2t-1,\qquad R(t)=2t+1 ,
\]
any $t\in T_{\{> 3\}}$ may be uniquely written as $t=(w_n\circ\dots\circ w_1)(3)$, which leads to the identification:
$$t=w_1\dots w_n\in\{L,R\}^\star \Leftrightarrow t=(w_n\circ\dots\circ w_1)(3),\ (\mbox{ and }3=\eps\in\{L,R\}^\star). $$
 By \cite[Observation 3]{Tarlowski}, the above representation is equivalent to the binary representation, namely, the binary representation of $t=w_1\dots w_n\in\{L,R\}^\star$ satisfies:
$$t=(1\beta(w_1)\dots \beta(w_n)1)_2,\mbox{ where } \beta(L)=0,\ \ \beta(R)=1.$$
In particular, given $w\in\{0,1\}^\star$, we get
$$(1w01)_2=2\cdot (1w1)_2-1\mbox{ and }(1w11)_2=2\cdot (1w1)_2+1.$$

In this paper, the set $T=2\N+1$ will be identified directly with $\{0,1\}^\star$.

\subsection{Words}\label{subsec:words}
 Given $u=u_1\dots u_n\in\{0,1\}^\star$, 
we will write $\ell(u)=n$ and 
$$ \lz(u)=\sum\limits_{i\leq n}1_{\{0\}}(u_i)\mbox{ and }\lo(u)=\sum\limits_{i\leq n}1_{\{1\}}(u_i),$$  so we have
$\ell(u)=\lz(u)+\lo(u)$. The \emph{balance} of $u$ is
\[
b(u)\;=\;\lo(u)-\lz(u)\in\Z .
\]
For $u=u_1u_2\dots u_n$ we write $\rev{u}=u_n\dots u_2u_1$ for the
\emph{reversal} of $u$, and $\mir{u}$ for the \emph{complement} of $u$ - the word obtained
by interchanging the digits $0$ and $1$ ($\mir{\eps}:=\eps$ and $\rev{\eps}:=\eps$ ).

\subsection{Measures}\label{subsec:measures}
Probability measures $P$ on $\Z$ are identified with the vectors $(P(d))_{d\in\Z}$. For $d\in\Z$,
the shift operator $\sigma_d$ acts by $(\sigma_d P)(k)=P(k-d)$. Following
\cite[Definition 4]{Tarlowski}, let
\[
\Phi(\mu,\nu)\;=\;\tfrac12\,\sigma_{-1}\mu+\tfrac12\,\sigma_{1}\nu .
\]

As in \cite{Tarlowski} ,we  will work with the family of measures $\{P_t\}_{t\in\N^+}$ given by:
\begin{equation}\label{eq:Pt}
P_1=\delta_0,\qquad P_{2t}=P_t,\qquad
P_{2t+1}=\Phi(P_{t+1},P_t)\quad(t\ge1),
\end{equation}
By \cite[Corollary 7]{Tarlowski},
\begin{equation}\label{eq:deconv}
\mu_t=\mu_1*P_t,\qquad (\
\mu_1=\sum_{j\ge0}(\frac12)^{j+1}\delta_{1-j}\ ).
\end{equation}
By $P_{2t}=P_t$, we will focus on the odd indexes $t\in T$, that is, we are interested in the family
$$\{P_{w}\}_{ w\in\{0,1\}^{\star}}, \mbox{ where }P_w=P_{(1w1)_2}.$$

\subsection{Trees}
Let $\mathcal{T}$ denote the set of finite full planar binary trees: the leaf
$\bullet$ belongs to $\mathcal{T}$, and if $T^-,T^+\in\mathcal{T}$ then so
does $[T^-,T^+]$.

Paper \cite{Tarlowski} expresses the family $\{P_t\}_{t\in T}$ as the family of trees $\{T_t\}_{t\in T}.$
First, $T_1=\bullet$, and $T_3=[T_3^-,T_3^+]=[\bullet,\bullet]$. Next, given $T_w=[T_w^-,T_w^+]$, we have the following growth recursion:
\begin{equation}\label{growth}
T_{w0}=[\,T_w,\;T_w^{+}\,],\qquad
T_{w1}=[\,T_w^{-},\;T_w\,],
\end{equation}
By Theorem 11 in \cite{Tarlowski}, a measure $P_t$ is the probability distribution of the stopped random walk determined by $T_t$; full details are given below.

\subsection{Simple random walk}\label{subsec:space}
Let $(\Omega,\mathcal F,\PP)$ be the following canonical probability space:
\begin{itemize}
\item $\Omega\;=\;\{0,1\}^{\N^+}$, the elementary events are denoted by $w=(w_1,w_2,\dots)\in\Omega$
\item $\mathcal F=\sigma\bigl([u]:u\in\{0,1\}^{\star}\bigr)$ is the $\sigma$- field of cylinder sets:
$$[\eps]=\Omega\mbox{ and } \ [u]=\bigl\{\omega\in\Omega:\ \omega_1\dots\omega_{\ell(u)}=u\bigr\}\mbox{ for }u\neq\eps.$$
\item $\PP$ is the infinite product of the fair-coin measures $\frac12\delta_0+\frac12\delta_1$:

\[
\PP\;=\;\bigotimes_{i\ge1}\Bigl(\tfrac12\,\dd_{0}+\tfrac12\,\dd_{1}\Bigr).
\]

\end{itemize}

 For $n\ge0$, the truncation map is denoted by
$\pi_n\colon\Omega\to\{0,1\}^{n}$, that is
$\pi_n(\omega)=\omega_1\dots\omega_n$ ($\pi_0(\omega)=\eps$), so for
$u\in\{0,1\}^{\star}$, the cylinder set $[u]$ satisfies
\[
[u]\;=\;\pi_{\ell(u)}^{-1}\bigl(\{u\}\bigr) \]

In particular, for $u\in\{0,1\}^{\star}$,
\begin{equation}\label{eq:cylmass}
\PP\bigl([u]\bigr)=2^{-\ell(u)},
\qquad
[u]=[u0]\sqcup[u1]\qquad \mbox{(the disjoint sum)}.
\end{equation}
We write $\mathcal F_n=\sigma(\pi_n)$, $n\in\N$, for the
natural filtration, and
\[
\theta\colon\Omega\to\Omega,\qquad
\theta(\omega)=(\omega_2,\omega_3,\dots)
\]
for the shift on $\Omega$, which preserves $\PP$. The simple random walk is defined by
\begin{equation}\label{eq:walk}
S_0=0,\qquad S_n=\xi_1+\dots+\xi_n\quad(n\ge1),
\end{equation}
where $\xi_k$ is determined by the $k$-th coordinate of $w$: $\xi_k=\;\chi(\omega_k),\  k\ge1,$ where
\begin{equation}\label{eq:steps}
\chi(a)=2a-1,\ \text{ so we have: } \chi(0)=-1,\ \chi(1)=+1 .
\end{equation}

Thus, the random variables $\xi_1,\xi_2,\dots$ are independent with:
$$\PP(\xi_k=-1)=\PP(\xi_k=+1)=\frac12,$$ and, for $ u\in\{0,1\}^{n}$,
\begin{equation}\label{eq:Sonu}
S_n=b(u)\ \text{ on }[u] .
\qquad .
\end{equation}

%Finite digit words therefore play two distinct roles below: as parameters and
%as labels of tree nodes on the one hand, and as truncations $\pi_n(\omega)$ of
%a sample point on the other. The letter $\omega$ is reserved for elements of
%$\Omega$. The coincidence of the two alphabets is deliberate: it is exactly
%what will identify the internal nodes of $T_w$ with the truncations of the
%paths that have not yet been stopped (Section~\ref{sec:nodes}).

\subsection{Trees and stopping times}\label{subsec:trees}
Every tree $T\in\mathcal{T}$ determines a
bounded stopping time $\tau_T\colon\Omega\to\N$ with respect to the
filtration $(\mathcal F_n)_{n\ge0}$. Given $T=\bullet$,
$$\tau_{\bullet}\equiv 0,$$
and, given $T=[T^0,T^1]$,
$$ \tau_{[T^0,T^1]}(\omega)\;=\;1+\tau_{T^{\omega_1}}\bigl(\theta\omega\bigr),\  \omega\in\Omega.$$

Above, we see that the first coordinate $\omega_1$ selects the subtree into which the
walk descends, and the remaining coordinates determine the further path. The $S_{\tau_T}$ will denote the random walk stopped at $\tau_T$, and the embedding map $\mclE(T)=\law(S_{\tau_T})$ assigns to each tree the law of the corresponding stopped random walk. We have the equality: $\mclE([T^-,T^+])=\Phi(\mclE(T^-),\mclE(T^+))$, which implies that  the family of trees $\{T_w\}_{w\in \{{0,1}\}^\star}$ given by equation \eqref{growth} represents the measures $\{P_w\}_w$, \cite[Theorem 11]{Tarlowski}. More specifically, any $P_w$
 %be the family of trees defined by \eqref{growth} - we put $T_1=T_{\bullet}$, and recall that we identify the indexes $\T_{\geq 3}$ with $\{0,1\}^\star$ by $(1w1)_2=w$. By
 is  the probability distribution of the random walk stopped at time $\tau_w$ determined by the tree $T_w$:
\begin{equation}\label{eq:PisE}
P_w\;=\;\mclE(T_w)\;=\;\law\bigl(S_{\tau_w}\bigr),
\qquad \tau_w:=\tau_{T_w},\qquad w\in\{0,1\}^\star .
\end{equation}

Given $w\in\{0,1\}^\star$, the stopping time $\tau_w$ returns the number of steps after which the random walk reaches a leaf ($\bullet$) of the tree $T_w$. Equivalently, $\tau_w$ is the step at which the walk leaves the set of internal nodes of the tree (an "exit time", with use of the language of \cite{Cheng}). By definition, the depth of the tree $T_w$ equals to $\ell(w)+1$, and hence
$$\tau_w\leq \ell(w)+1.$$

The encoding \eqref{eq:steps} is consistent with the growth recursion
\eqref{growth}: the left/right, edge of a tree corresponds to the
digit $0$/$1$, and to the step $-1$/$+1$. In consequence, for
$u\in\{0,1\}^{n}$ the cylinder $[u]$ is precisely the event that the first $n$ steps of the random walk follow the path  labelled by $u$. 

\section{Random walk on a tree and the running mass of $\mathbb{N}$}
%, and, by
%\eqref{eq:cylmass} and \eqref{eq:Sonu},
%$$\PP([u])=\frac{1}{2^{\ell(u)}}\quad\mbox{ and }\quad S_n=b(u)\ \mbox{ on }[u].$$

Given $w\in\{0,1\}^\star$, we will consider the stopped random walk $S^w$ given by:
$$S^w_n=S_{\min(\tau_w,n)}=\sum\limits_{k=1}^{\min(\tau_w,n)}\xi_k,\qquad
(\tau_w=\tau_{T_w}).$$
The above process is the random walk on the tree $T_w$ which moves to the right/left with probability $1/2$ while it occupies any internale node of the tree, and it stops when it reaches any leaf  of the tree. By $\tau_w\leq \ell(w)+1$, we  have:
$$P_w=\law( S^w_{\ell(w)+1}).$$

 The questions regarding the inequalities $P_t(\N)\geq\frac12$ and $P_t(\N)>\frac12$ are directly related to the median preserving property of the stopped process $\{S^w_n\}_{n\in\N}$. For any $w\in\{0,1\}^\star$, we have:
$$ \PP[S^w_0\geq 0]=1\mbox{ and } \PP[S^w_1\geq 0]=\frac12.$$ 
It is easy to express how the probability $\PP(S^w_n\in \N)$ changes from one step to another- any running path of the random walk splits the mass of every point as long as the walk does not leave the set of the internal nodes ( an internal node passes half of its weight to each child at one step). Define 
$$\eta_w=\tau_w-1$$
so the value of $\eta_w$ is the step at which the random walk $S^w_n$ hits the set of the internal nodes for the last time.\\

The control of 
$$P_t(\N)=\mathbb{P}( S^w_{\ell(w)+1}\in \N)$$
is about describing the passages of the process $S^w_n=S_{\min(\tau_w,n)}$ between the positions $-1$ and $0$.  For instance, for $n\in2\mathbb{N}$, any path $u=u_1\dots u_n\in\{0,1\}
^\star$ with $b(u)=0$ and $n\leq \eta_w(u)$ will decrease the value of the running mass of $\N$ by the value $\frac12\cdot \PP([u])=\frac1{2^{n+1}}$ at  step $n+1$ because on the subcylinder $[u0]=[u_1\dots 
u_n 0]$ the random walk will move from the position $x=0$ to the position $x=-1$ at this step.\\% Similarly, for $n\in2\mathbb{N}+1$, any path of the random walk $u=u_1\dots u_n\in\{0,1\}^\star$ with $b(u)=-1$ and $n\leq 
%\eta_w(u)$ will increase  the value of $\PP(S_n^w\in\N)$ by $\frac12\cdot \PP([u])=\frac1{2^{n+1}}$ at the  step $n+1$. We will express it more generally in Proposition \ref{Azyan}. \\

%The events that govern the above exchanges of mass are described by the
%position of the walk \emph{together with} its survival. 

Given $w\in\{0,1\}^\star$, for $m\ge0$ and
$x\in\Z$ define
\begin{equation}\label{eq:rho}
\rho^{w}_m(x):=\;\PP\bigl[\tau_w>m,\ S_m=x\bigr]
\;=\;\PP\bigl[\eta_w\ge m,\ S_m=x\bigr],
\end{equation}
the probability that after $m$ steps the walk still occupies an internal
node of $T_w$, located at the position $x\in \mathbb{Z}$ ( and $m$ is the depth of the node in the tree). Naturally, if $x \not\equiv m \pmod 2$, then $\PP[S_m=x]=0$, and hence

\begin{equation}\label{eq:parity}
\rho^{w}_m(x)=0\qquad\text{for any } x\not\equiv m\ (\mathrm{mod}\ 2).
\end{equation}

 To sum up, the running mass of $\N$ decreases only at
the passages from an even to an odd time (by halving  the probability of
surviving at the position $0$), and increases only at the passages from
an odd to an even time ( by half of the probability of surviving at the
position $-1$),  see Proposition \ref{Azyan} below. 
%Note that the event $\{\tau_w>m\}$ is the disjoint union of the events
%$[u]$ over the words $u\in\{0,1\}^{m}$ which are internal nodes of
%$T_w$, and $S_m=b(u)$ on $[u]$; hence $2^{m}\rho^{w}_m(x)$ is the
%number of internal nodes of $T_w$ at depth $m$ and position $x$.
 
\begin{proposition}\label{Azyan}
\label{prop:onestep}
For every $w\in\{0,1\}^{\star}$ and every $m\ge0$,
\begin{equation}\label{eq:onestep}
\PP\bigl[S^{w}_{m+1}\ge0\bigr]-\PP\bigl[S^{w}_{m}\ge0\bigr]
\;=\;-\tfrac12\,\rho^{w}_m(0)\;+\;\tfrac12\,\rho^{w}_m(-1).
\end{equation}
In particular, by \eqref{eq:parity}, for every $n\in\N$,
\begin{equation}\label{eq:twosteps}
\begin{aligned}
\PP\bigl[S^{w}_{2n+1}\ge0\bigr]
&=\PP\bigl[S^{w}_{2n}\ge0\bigr]-\tfrac12\,\rho^{w}_{2n}(0),\\
\PP\bigl[S^{w}_{2n+2}\ge0\bigr]
&=\PP\bigl[S^{w}_{2n+1}\ge0\bigr]+\tfrac12\,\rho^{w}_{2n+1}(-1).
\end{aligned}
\end{equation}
\end{proposition}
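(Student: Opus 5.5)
The plan is to decompose the event $\{S^w_{m+1}\ge0\}$ according to whether the walk has already stopped by time $m$. Since $\tau_w$ is a stopping time, $\{\tau_w\le m\}\in\mathcal F_m$.

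On the event $\{\tau_w\le m\}$ we have $S^w_{m+1}=S^w_m=S_{\tau_w}$. Hence
\[
\PP\bigl[S^w_{m+1}\ge0,\ \tau_w\le m\bigr]=\PP\bigl[S^w_m\ge0,\ \tau_w\le m\bigr],
\]
and this part contributes nothing to the difference.

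On the complementary event $\{\tau_w>m\}$ we have $\tau_w\ge m+1$. Therefore $S^w_m=S_m$ and $S^w_{m+1}=S_{m+1}=S_m+\xi_{m+1}$. I would split this event further by the value $S_m=x$. The event $\{\tau_w>m,\,S_m=x\}$ is $\mathcal F_m$-measurable: it is a disjoint union of cylinders $[u]$ with $u\in\{0,1\}^m$ and $b(u)=x$, namely those $u$ that are internal nodes. Since $\xi_{m+1}$ depends only on $\omega_{m+1}$, it is independent of $\mathcal F_m$. Concretely, each such cylinder splits as $[u]=[u0]\sqcup[u1]$ by \eqref{eq:cylmass}, with each half carrying half the mass. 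Consequently
\[
\PP\bigl[\tau_w>m,\ S_m=x,\ S_{m+1}=x\pm1\bigr]=\tfrac12\,\rho^w_m(x).
\]

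Now compare $\mathbf 1[S_{m+1}\ge0]$ with $\mathbf 1[S_m\ge0]$ on $\{\tau_w>m,\ S_m=x\}$:
\begin{itemize}
\item For $x\ge1$ or $x\le-2$, a single step of size $\pm1$ cannot change the sign status, so these $x$ contribute zero.
\item For $x=0$, the indicator drops from $1$ to $0$ exactly on the half where $\xi_{m+1}=-1$. This gives $-\tfrac12\rho^w_m(0)$.
\item For $x=-1$, the indicator rises from $0$ to $1$ exactly on the half where $\xi_{m+1}=+1$. This gives $+\tfrac12\rho^w_m(-1)$.
\end{itemize}
Summing the two parts yields \eqref{eq:onestep}. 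The sums over $x$ are finite, and all quantities are bounded, so there are no convergence issues.

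For \eqref{eq:twosteps}, apply \eqref{eq:parity}. At $m=2n$ we have $\rho^w_{2n}(-1)=0$, since $-1\not\equiv 2n \pmod 2$. At $m=2n+1$ we have $\rho^w_{2n+1}(0)=0$ for the same reason. Substituting these into \eqref{eq:onestep} gives the two lines of \eqref{eq:twosteps}.

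The only point needing care is the measurability and independence claim: $\{\tau_w>m\}\in\mathcal F_m$, and $\xi_{m+1}$ is independent of $\mathcal F_m$. Both follow directly from the recursive definition of $\tau_T$ (a bounded stopping time) and from the product structure of $\PP$. No step here is a real obstacle.
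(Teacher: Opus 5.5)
Your proof is correct and follows essentially the same route as the paper. You split on $\{\tau_w\le m\}$ versus $\{\tau_w>m\}$ and decompose the latter into the events $\{\tau_w>m,\ S_m=x\}$. You then use the independence of $\xi_{m+1}$ from $\mathcal F_m$ to see that only $x=0$ and $x=-1$ change the indicator, each by half of $\rho^w_m(x)$, and finish with \eqref{eq:parity} for the two-step identities; your version is just slightly more explicit than the paper's about why these events are $\mathcal F_m$-measurable.
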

 
\begin{proof}
We need to prove \eqref{eq:onestep}. On $\{\tau_w\le m\}$ we have $S^{w}_{m+1}=S^{w}_{m}$, so this event does
not contribute to the increment. On the event $\{\tau_w>m\}$ we have
$$S^{w}_{m}=S_m\mbox{ and }S^{w}_{m+1}=S_m+\xi_{m+1}.$$

For $x\in\Z$, the event 
$$A^m_x:=\{\tau_w>m,\ S_m=x\}$$ is independent of $\xi_{m+1}$, and 
$$ \{\tau_w>m\}=\bigcup_{x\in\Z}A^m_x\mbox{ and }\PP(A^m_x)=\rho^{w}_m(x)$$ 
The characteristic functions of $\{S^{w}_{m+1}\ge0\}$ and $\{S^{w}_{m}\ge0\}$ may differ only on $A^m_{0}$ and $A^m_{-1}$:
$$\PP\bigl[S^{w}_{m+1}\ge0\bigr]-\PP\bigl[S^{w}_{m}\ge0\bigr]=\PP[A^m_{-1}, \xi_{m+1}=+1]-\PP[A^m_{0}, \xi_{m+1}=-1]=$$
$$\frac12\PP[A^m_{-1}]-\frac12\PP[A^m_{0}]=-\tfrac12\,\rho^{w}_m(0)\;+\;\tfrac12\,\rho^{w}_m(-1).$$
\end{proof}

From the above:

$$ \PP[S^w_{2n+1}\geq 0] \leq  \PP[S^w_{2n}\geq0] \mbox{ and } \PP[S^w_{2n+2}\geq 0 ] \geq  \PP[S^w_{2n+1}\geq0 ],\ n\in\N.$$ 
\ \\

Now, we aim to prove the  analogue of Proposition 4.2 from \cite{Cheng} which was formulated therein in the context of deletion-closed languages.  From Proposition \eqref{Azyan}, by telescoping,

\begin{corollary}\label{cor:telescope}
For every $w\in\{0,1\}^{\star}$,
\begin{equation}\label{eq:telescope}
P_w(\N)\;=\;1\;-\;\frac12\sum_{n\ge0}
\Bigl(\rho^{w}_{2n}(0)-\rho^{w}_{2n+1}(-1)\Bigr)
\;=\;\frac12\;+\;\frac12\sum_{n\ge0}
\Bigl(\rho^{w}_{2n+1}(-1)-\rho^{w}_{2n+2}(0)\Bigr).
\end{equation}
As $\rho^{w}_m=0$ for $m>\ell(w)$, the above sums are finite. 
\end{corollary}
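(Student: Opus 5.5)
The plan is to obtain both expressions in \eqref{eq:telescope} by summing the one-step increments of Proposition~\ref{Azyan} over all times $m\ge0$. The starting value is $\PP[S^w_0\ge0]=1$. The walk stops no later than $\ell(w)+1$, so $P_w(\N)=\PP[S^w_{M}\ge0]$ for every $M\ge\ell(w)+1$.

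Fix $M=2N$ with $2N\ge \ell(w)+1$ and sum \eqref{eq:onestep} over $m=0,\dots,2N-1$. By \eqref{eq:parity}, only $\rho^w_{2n}(0)$ contributes at even times and only $\rho^w_{2n+1}(-1)$ at odd times. Grouping the times $2n$ and $2n+1$ together gives the pairs in \eqref{eq:twosteps}, so
\[
P_w(\N)=1-\tfrac12\sum_{n=0}^{N-1}\bigl(\rho^w_{2n}(0)-\rho^w_{2n+1}(-1)\bigr).
\]
Since $\rho^w_m(x)\le\PP[\tau_w>m]=0$ for $m\ge\ell(w)+1$, all later terms vanish. Hence the sum may be extended to $n\ge0$, and it is finite. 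This proves the first equality.

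For the second equality, note that $\rho^w_0(0)=\PP[\tau_w>0]=1$. This holds because $\tau_w\ge1$ whenever $T_w\neq\bullet$, which is always the case for $w\in\{0,1\}^\star$ since $T_w$ has depth $\ell(w)+1\ge1$. Splitting off this term gives $1-\tfrac12\rho^w_0(0)=\tfrac12$. Then reindex the remaining terms, pairing $\rho^w_{2n+1}(-1)$ with $\rho^w_{2n+2}(0)$ instead of with $\rho^w_{2n}(0)$. All the sums involved are finite, so this regrouping is legitimate and yields the second expression.

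The only step needing care is justifying that the sum can be truncated. This follows from $\tau_w\le\ell(w)+1$: it gives $\rho^w_m\equiv0$ for $m>\ell(w)$ and keeps $S^w_m$ constant from time $\ell(w)+1$ on. Everything else is bookkeeping.
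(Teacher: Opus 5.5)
Your proposal is correct and is essentially the paper's argument. You telescope the one-step identity of Proposition~\ref{Azyan} starting from $\PP[S^w_0\ge0]=1$, use parity to pair the even and odd terms, and get the second form by splitting off $\rho^w_0(0)=\PP[\tau_w>0]=1$. The only difference is that you spell out the truncation via $\tau_w\le\ell(w)+1$, which the paper's two-line proof leaves implicit.
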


\begin{proof}
 The first equality is a direct conclusion from Proposition \ref{Azyan}. The second equality follows from $\rho^{w}_0(0)=\PP[\tau_w>0]=1$. 
\end{proof}

\begin{remark} $P_w(\N)\ge\tfrac12$ if and only if
$\sum_{n\ge0}\bigl(\rho^{w}_{2n}(0)-\rho^{w}_{2n+1}(-1)\bigr)\le1$, and
the same equivalence holds with both inequalities strict.
\end{remark}

Define

\begin{equation}\label{eq:pathlevels}
\widetilde{B}_n^{x}(w)
\;=\;\bigl\{u\in\{0,1\}^{n}:\ [u]\subseteq\{\tau_w>n\},\ b(u)=x\bigr\}
%\;=\;\bigl\{u\in N(T_{w}):\ \ell(u)=n,\ b(u)=x
\bigr\},
\end{equation}

and note that, since $\PP([u])=\frac1{2^n}$ for any $u$ from $\widetilde{B}_n^{x}(w)$, we have
\begin{equation}\label{eq:nodecount}
\rho^{w}_n(x)\;=\frac1{2^n}\,\bigl|\widetilde{B}_n^{x}(w)\bigr|,
\qquad n\ge0,\ x\in\Z .
\end{equation}
Now, define
\begin{equation}\label{eq:pathdefects}
\widetilde{\Delta}^{-}_m(w)
\;=\;2\,\bigl|\widetilde{B}_{2m-1}^{-1}(w)\bigr|
-\bigl|\widetilde{B}_{2m}^{0}(w)\bigr|,
\qquad m\ge1 .
\end{equation}

By Corollary \ref{cor:telescope},
 \begin{corollary}\label{cor:pathexpansion}
For every $w\in\{0,1\}^{\star}$,
\begin{equation}\label{eq:pathexpansion}
P_w(\N)\;=\;\frac12+\sum_{m\ge1}(\frac12)^{2m+1}\cdot \,
\widetilde{\Delta}^{-}_m(w),
\end{equation}
the sum having finitely many nonzero terms.
\end{corollary}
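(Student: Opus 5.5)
The statement to prove is Corollary~\ref{cor:pathexpansion}. I will derive it directly from the second expression in Corollary~\ref{cor:telescope} by re-indexing and substituting the node counts \eqref{eq:nodecount}.

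First, put $m=n+1$ in \eqref{eq:telescope}, so that $2n+1=2m-1$ and $2n+2=2m$ with $m\ge1$. This gives
\[
P_w(\N)=\frac12+\frac12\sum_{m\ge1}\Bigl(\rho^{w}_{2m-1}(-1)-\rho^{w}_{2m}(0)\Bigr).
\]

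Next, use \eqref{eq:nodecount}, which says $\rho^w_n(x)=2^{-n}|\widetilde B^x_n(w)|$. Its justification is that $\{\tau_w>n\}\in\mathcal F_n$ because $\tau_w$ is a stopping time. Hence $\{\tau_w>n\}$ is a disjoint union of cylinders $[u]$ with $u\in\{0,1\}^n$. On each such cylinder $S_n=b(u)$ by \eqref{eq:Sonu}, and each has mass $2^{-n}$ by \eqref{eq:cylmass}.

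Substituting, each summand becomes
\[
2^{-(2m-1)}|\widetilde B^{-1}_{2m-1}(w)|-2^{-2m}|\widetilde B^{0}_{2m}(w)|
=2^{-2m}\Bigl(2|\widetilde B^{-1}_{2m-1}(w)|-|\widetilde B^{0}_{2m}(w)|\Bigr)
=2^{-2m}\,\widetilde\Delta^-_m(w).
\]
Multiplying by the prefactor $\tfrac12$ yields the coefficient $(\tfrac12)^{2m+1}$, which is exactly \eqref{eq:pathexpansion}.

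Finally, finiteness. Since $\tau_w\le\ell(w)+1$, the set $\{\tau_w>n\}$ is empty for $n>\ell(w)$, so $\widetilde B^x_n(w)=\emptyset$. Therefore $\widetilde\Delta^-_m(w)=0$ as soon as $2m-1>\ell(w)$, and only finitely many terms are nonzero.

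There is no real obstacle here. The only points needing care are the index shift and the parity bookkeeping, and the parity vanishing \eqref{eq:parity} is already built into Corollary~\ref{cor:telescope}.
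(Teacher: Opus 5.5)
Your proof is correct and follows essentially the same route as the paper, which also obtains the corollary from the second form of Corollary~\ref{cor:telescope}. Both substitute $\rho^w_n(x)=2^{-n}|\widetilde B^x_n(w)|$ from \eqref{eq:nodecount}, shift the index to $m=n+1$, and get finiteness from $\tau_w\le\ell(w)+1$.
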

Corollary \ref{cor:pathexpansion} holds true for a random walk on any binary tree - it is an analogue of Proposition 4.2 from \cite{Cheng}.  In case of the family $\{T_w\}_{w\in\{0,1\}^\star}$, by Lemma 4.3 in \cite{Cheng}), it is already known that $\widetilde{\Delta}^{-}_m(w)\geq 0$ for any $m\in \N_{\geq 1}$. In Section \ref{criterion}, this inequality  will be  established as the conclusion  from Theorem 5.2 . Having known that  $\widetilde{\Delta}^{-}_m(w)\geq 0$, in order to prove the Theorem \ref{main} we will need to show that for any $w\in\{0,1\}^\star$ the following conditions are equivalent:
\begin{enumerate}
\item $w$ is saturated in sense of  Definition 2.1 
\item for any $m\geq  1$, $\widetilde{\Delta}^{-}_m(w)= 0$.
\end{enumerate}
%The next section introduces the combinatorial machinery of 

\section{The subword order and the node language of $T_t$}
\label{sec:nodes}
 In \cite{Tarlowski}, the stopping times $\tau_w$, $w\in\{0,1\}^\star$, have been defined recursively. Paper  \cite{Cheng} has introduced the combinatorial description of the trees $T_w$ that enables an explicit description of the stopping times $\tau_w$. In this section, we introduce the node-enumeration from \cite{Cheng}, and also present some of its consequences. For the reader's convenience, this section is self-contained and includes proofs of all the statements presented.
%In this section we introduce the combinatorial apparatus of Cheng
%cite{Cheng}: the scattered--subword order, the level sets of subwords,
%the insertion slots, and the node language of a tree. The main results of
%the section identify the node language $N(T_t)$ with a principal ideal of
%the subword order and express the killed occupations \eqref{eq:occdef}
%as weighted counts of subwords of $w$.

\begin{definition}[Subword order]\label{def:subword}
For $u,a\in\{0,1\}^{\star}$ with $u=u_1\dots u_n$ and $a=a_1\dots a_N$ we
will say that $u$ a \emph{subword} of $a$, denoted by $u\sub a$, if there exist
indices $1\le i_1<i_2<\dots<i_n\le N$ such that $u_j=a_{i_j}$ for any
$1\le j\le n$; such a sequence of indices is called an \emph{embedding} of
$u$ into $a$. We set
\[
\Sub(a)\;=\;\{u\in\{0,1\}^{\star}:\;u\sub a\}.
\]
\end{definition}

 The relation $\sub$ is a partial order, and $u\in \Sub(a)$ iff and $u$ is
obtained from $a$ by deleting some of the digits (in particular, $\eps\in \Sub(a)$). Obviously:
\begin{equation}\label{eq:monotone}
u\sub a\ \Longrightarrow\ \lz(u)\le\lz(a)\ \text{ and }\ \lo(u)\le\lo(a).
\end{equation}
Additionally, is easy to note that for any $a\in\{0,1\}^{\star}$,
\begin{align}
\{u:\ 0u\sub 0a\}&=\Sub(a), &
\{u:\ 1u\sub 0a\}&=\{u:\ 1u\sub a\},\label{eq:prependL}\\
\{u:\ 1u\sub 1a\}&=\Sub(a), &
\{u:\ 0u\sub 1a\}&=\{u:\ 0u\sub a\}.\label{eq:prependR}
\end{align}

%All counting in this paper is organized by the \emph{levels} of the
%subword order
 For $w\in\{0,1\}^{\star}$, $n\ge0$ and $x\in\Z$ let
\begin{equation}\label{eq:levelsets}
B_n^{x}(w)\;=\;\bigl\{u\sub w:\ \ell(u)=n,\ b(u)=x\bigr\}.
\end{equation}
 A word of length $n$ and balance $x$ has $\lz=\frac{n-x}{2}$ zeros and
$\lo=\frac{n+x}{2}$ ones. Additionally, if either $|x|>n$ or $ x\not\equiv n\ (\mathrm{mod}\ 2)$, then
\begin{equation}\label{eq:levelparity}
B_n^{x}(w)=\varnothing. \end{equation}

The reversal map $u\mapsto\rev{u}$ is a bijection between $B_n^{x}(w)$ and $B_n^{x}\bigl(\rev{w}\bigr)$, and the complement map $u\mapsto\mir{u}$ is a bijection between $B_n^{x}(w)$ and $B_n^{-x}\bigl(\mir{w}\bigr)$. Hence:

$$|B_n^{x}(w)|=|B_n^{x}(\rev{w})|=|B_n^{-x}(\mir{w})|.$$

Finally, given $m\geq 1$ and $w\in\{0,1\}^\star$:
\begin{equation}\label{eq:defects}
\Delta^-_m(w)\;:=\;2\,\bigl|B_{2m-1}^{-1}(w)\bigr|-\bigl|B_{2m}^{0}(w)\bigr|,
%\qquad
%\Delta^+_m(w)\;=\;2\,\bigl|B_{2m-1}^{+1}(w)\bigr|-\bigl|B_{2m}^{0}(w)\bigr|,
%\qquad m\ge1 .
\end{equation}
%By \eqref{eq:levelsym}, $\Delta^{\pm}_m(\rev{w})=\Delta^{\pm}_m(w)$ and
%$\Delta^{+}_m(w)=\Delta^{-}_m(\mir{w})$ for all $m$.

\begin{definition}[Insertion slots]\label{def:slots}
Let $v=v_1\dots v_n\in\{0,1\}^{\star}$. The word $v$ has $n+1$ \emph{insertion
slots} - positions indexed by $j\in\{0,1,\dots,n\}$. Inserting a digit $X\in\{0,1\}$ in
slot $j$ produces the word
\[
\iota^{X}_{j}(v):=\;v_1\dots v_j\,X\,v_{j+1}\dots v_n
\]
of length $n+1$, in which the inserted digit occupies position $j+1$.
\end{definition}

Above, one may note that the  insertion of a digit in two different slots of the given word may result in producing the same word.\\

 Given a digit $X\in\{0,1\}$ and $A\subset\{0,1\}^\star$,\\
$$X\,A:=\{Xu:\ u\in A\}\mbox{ ( in particular},\ X\varnothing=\varnothing.)$$
The following definition allows to identify a binary tree with the set of its internal nodes.

\begin{definition}[Node language]\label{def:nodelanguage}
The \emph{node language} $N(T)\subseteq\{0,1\}^{\star}$ of a tree
$T\in\mathcal{T}$ is defined recursively by
\[
N(\bullet)=\varnothing,\qquad
N([T^-,T^+])=\{\eps\}\;\cup\;0\,N(T^-)\;\cup\;1\,N(T^+),
\]

\end{definition}
 Given any $T\in\mathcal{T}$,  $N(T)$ is the set of words labelling
the internal nodes of $T$ (in particular,   the root of the tree is labelled by $\eps$), and the leafs of the tree are not labelled. For instance, 
$$N(T_{\eps})=N(T_{(11)_2})=N([\bullet,\bullet])=\{\eps\}\mbox{ and }N(T_{(111)_2})=N([\bullet, [\bullet,\bullet]])=\{\eps,1\}.$$ 
Recall that $[\eps]=\Omega$. We write: $\bigcup\limits_{\substack{u\in \varnothing}}[u]=\varnothing$. From the recursive definitions of
$\tau_T$ and $N(T)$:
\begin{equation}\label{eq:cylinder}
\{\tau_T>n\}\;=\;\bigcup_{\substack{u\in N(T)\\ \ell(u)=n}}[u], n\geq0.
\end{equation}
Equivalently, as the set of labelling $N(T)$ is prefixed-closed by definition, we have:
\begin{equation}\label{eq:firstexit}
\tau_T(\omega)\;=\;\min\bigl\{n\ge0:\ \omega_1\dots \omega_n\notin N(T)\bigr\},\ \omega\in\Omega.
\end{equation}

%Equation \eqref{eq:cylinder} partitions the event $\{\tau_T > n\}$ according to
%the first $n$ steps of the walk: the walk has not yet stopped after $n$ steps
%if and only if the word encoding these steps labels an internal node of $T$ at
%depth $n$.

\begin{proposition}[{\cite{Cheng},Proposition 3.3}]\label{Chengis}
\label{Nodes}
For every $w\in\{0,1\}^{\star}$,
\begin{equation}\label{eq:nodelanguage}
N\bigl(T_{w}\bigr)\;=\;\Sub\bigl(\rev{w}\bigr).
\end{equation}
\end{proposition}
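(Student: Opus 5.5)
The proof is by induction on the length $\ell(w)$, using the growth recursion \eqref{growth} together with the prepending identities \eqref{eq:prependL}--\eqref{eq:prependR}. The point is that the recursion for $N(\cdot)$ in Definition~\ref{def:nodelanguage} and the decomposition of $\Sub(\cdot)$ by first letter have the same shape.

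First, from Definition~\ref{def:nodelanguage}, for any tree $T=[T^-,T^+]$ I read off the identities
\[
N(T^-)=\{u:\ 0u\in N(T)\},\qquad N(T^+)=\{u:\ 1u\in N(T)\}.
\]
These hold because the three sets $\{\eps\}$, $0N(T^-)$ and $1N(T^+)$ are disjoint, being distinguished by their first letter. By \eqref{growth}, every $T_w$ with $w\in\{0,1\}^\star$ is of the form $[T_w^-,T_w^+]$: this holds for $T_\eps=[\bullet,\bullet]$ and is preserved by both growth steps. So $T_w^\pm$ are always defined. Under the induction hypothesis $N(T_w)=\Sub(\rev w)$, the identities give
\[
N(T_w^-)=\{u:\ 0u\sub\rev w\},\qquad N(T_w^+)=\{u:\ 1u\sub \rev w\}.
\]

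Base case: $w=\eps$. Here $N(T_\eps)=N([\bullet,\bullet])=\{\eps\}=\Sub(\eps)$.

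Inductive step for $w0$. Write $a=\rev w$, so that $\rev{(w0)}=0a$. By \eqref{growth},
\[
N(T_{w0})=N([T_w,T_w^+])=\{\eps\}\cup 0\,\Sub(a)\cup 1\{u:\ 1u\sub a\}.
\]
Now split $\Sub(0a)$ according to the first letter:
\[
\Sub(0a)=\{\eps\}\cup 0\{u:\ 0u\sub 0a\}\cup 1\{u:\ 1u\sub 0a\}.
\]
By \eqref{eq:prependL}, the right-hand side equals $\{\eps\}\cup 0\,\Sub(a)\cup 1\{u:\ 1u\sub a\}$. Hence $N(T_{w0})=\Sub(0a)$.

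Inductive step for $w1$. Here $\rev{(w1)}=1a$, and
\[
N(T_{w1})=N([T_w^-,T_w])=\{\eps\}\cup 0\{u:\ 0u\sub a\}\cup 1\,\Sub(a).
\]
Splitting $\Sub(1a)$ by first letter and applying \eqref{eq:prependR} gives the same set, so $N(T_{w1})=\Sub(1a)$.

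Potential obstacle. The only delicate point is the direction of reversal. The growth \eqref{growth} appends a digit at the right end of $w$, while that digit becomes the root-level choice, i.e.\ the first letter of node labels. This is why $\rev w$ appears and why appending to $w$ corresponds to prepending to $\rev w$. Once this is set up, the identities \eqref{eq:prependL}--\eqref{eq:prependR}, which are immediate by taking a leftmost embedding, close the induction.
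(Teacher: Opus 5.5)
Your proof is correct and follows essentially the same route as the paper: induction on $\ell(w)$ with base case $T_{\eps}=[\bullet,\bullet]$, reading off $N(T^{\pm})$ from the recursive definition of the node language, and matching $N(T_{w0})$ and $N(T_{w1})$ against the first-letter decompositions of $\Sub(0a)$ and $\Sub(1a)$ via \eqref{eq:prependL}--\eqref{eq:prependR}. Beyond the paper, you write out the $w1$ case, which the paper calls analogous, and you make explicit the disjointness and the fact that $T_w$ always has two children.
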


\begin{proof}
Induction on $\ell(w)$. For $w=\eps$ we have $T_{\eps}=[\bullet,\bullet]$ and
$N(T_{\eps})=\{\eps\}=\Sub(\eps)$.

Now, note that, by Definition~\ref{def:nodelanguage}, for $T=[T^-,T^+]$,
\begin{equation}\label{eq:subtreelanguages}
N(T^-)=\{u:\ 0u\in N(T)\},
\qquad
N(T^+)=\{u:\ 1u\in N(T)\}.
\end{equation}
Now, for the induction step assume that  $w$ satisfy \eqref{eq:nodelanguage}. Put $a=\rev{w}$ so we have:
$$N(T_w)=\Sub(a).$$
By  the growth recursion \eqref{growth}, $T_{w0}=[T_w,T_w^+]$. Hence by
Definition~\ref{def:nodelanguage} and \eqref{eq:subtreelanguages},
\[
N(T_{w0})
=\{\eps\}\cup 0\,N(T_w)\cup 1\,N(T_w^{+})
=\{\eps\}\cup 0\,\Sub(a)\cup 1\,\{u:\ 1u\sub a\}.
\]
On the other hand, decomposing $\Sub(0a)$ according to the first digit:
\[
\Sub(0a)=\{\eps\}\cup 0\,\{u:\ 0u\sub 0a\}\cup 1\,\{u:\ 1u\sub 0a\}
=\{\eps\}\cup 0\,\Sub(a)\cup 1\,\{u:\ 1u\sub a\}.
\]
Since $\rev{w0}=0\,\rev{w}=0a$, this proves \eqref{eq:nodelanguage} for
$w0$. The argument for $w1$ is analogous.
\end{proof}

The above proposition provides a compact combinatorial description of $\tau_u$.  Indeed, by \eqref{eq:firstexit}, it leads to:

\begin{corollary}
For any $u\in\{0,1\}^\star$, 
\begin{equation}\label{tauw}
\tau_u(w)=\inf \{n\geq 1\colon w_1\dots w_n\nsub \rev{u}\}
\end{equation}
\end{corollary}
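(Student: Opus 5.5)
The corollary follows by translating Proposition~\ref{Chengis} through the first-exit formula \eqref{eq:firstexit}. I would make the translation explicit and handle the index $n=0$ separately.

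First, apply \eqref{eq:firstexit} to $T=T_u$. This gives
\[
\tau_u(\omega)=\min\{n\ge0:\ \omega_1\dots\omega_n\notin N(T_u)\}.
\]
By Proposition~\ref{Chengis}, $N(T_u)=\Sub(\rev{u})$. Hence $\omega_1\dots\omega_n\notin N(T_u)$ holds exactly when $\omega_1\dots\omega_n\nsub\rev{u}$.

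Second, I would exclude $n=0$ from the minimum. The truncation at $n=0$ is $\eps$, and $\eps\sub\rev{u}$ always holds, so $\eps\in\Sub(\rev{u})$. Therefore $n=0$ never belongs to the set, and the minimum over $n\ge0$ equals the infimum over $n\ge1$. This is also consistent with $T_u$ never being a single leaf: $T_u=\bullet$ would correspond to $t=1$, which is excluded, since $u$ indexes $t=(1u1)_2\ge3$.

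Third, I would check that the infimum is finite, so that it really is a minimum and \eqref{eq:firstexit} applies. Every element of $\Sub(\rev{u})$ has length at most $\ell(u)$, so $n=\ell(u)+1$ always lies in the set. This matches the bound $\tau_u\le\ell(u)+1$.

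The only point needing care is the letter clash in \eqref{tauw}: there $w$ denotes the sample point $\omega\in\Omega$, not a parameter word. Beyond that there is no real obstacle.
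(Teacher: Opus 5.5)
Your proposal is correct and follows the paper's route: substitute $N(T_u)=\Sub(\rev{u})$ from Proposition~\ref{Chengis} into the first-exit formula \eqref{eq:firstexit}. Your explicit treatment of $n=0$ (via $\eps\in\Sub(\rev{u})$) and of finiteness (via $n=\ell(u)+1$) spells out details the paper leaves implicit.
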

By Proposition \ref{Chengis} and  \eqref{eq:cylinder}, and by $|B_n^x(w)|=|B_n^x(\rev{w})|$,
\begin{corollary}\label{BB}
For any $w\in\{0,1\}^\star$, $x\in\Z$, $n\geq0$, 
$$\widetilde{B}_n^x(w)=B_n^x(\rev{w}),\mbox{ and hence }, \widetilde{\Delta}^-_m(w)=\Delta^-_m(w).$$
\end{corollary}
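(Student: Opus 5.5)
The statement to prove is Corollary~\ref{BB}. It follows from three facts established above: the description \eqref{eq:cylinder} of the event $\{\tau_T>n\}$ as a union of cylinders, Proposition~\ref{Chengis}, and reversal invariance of the level sets.

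\textbf{Step 1: identify the surviving cylinders.} Fix $w$, $n\ge0$ and $x\in\Z$, and write $T=T_w$. By \eqref{eq:cylinder},
\[
\{\tau_w>n\}=\bigcup_{u\in N(T_w),\,\ell(u)=n}[u],
\]
and cylinders of the same length are pairwise disjoint. Let $u\in\{0,1\}^n$.
\begin{itemize}
\item If $u\in N(T_w)$, then $[u]\subseteq\{\tau_w>n\}$ trivially.
\item If $u\notin N(T_w)$, then $[u]$ is disjoint from every $[v]$ with $v\ne u$ and $\ell(v)=n$. Since $[u]$ is nonempty, $[u]\not\subseteq\{\tau_w>n\}$.
\end{itemize}
Hence, for $\ell(u)=n$, we have $[u]\subseteq\{\tau_w>n\}$ if and only if $u\in N(T_w)$.

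\textbf{Step 2: apply Proposition~\ref{Chengis}.} It gives $N(T_w)=\Sub(\rev w)$. Comparing \eqref{eq:pathlevels} with \eqref{eq:levelsets}, applied to the word $\rev w$, we get
\[
\widetilde B_n^x(w)=\{u\sub\rev w:\ \ell(u)=n,\ b(u)=x\}=B_n^x(\rev w).
\]
These are literal set equalities.

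\textbf{Step 3: pass to the defects.} By the reversal bijection noted after \eqref{eq:levelparity}, $|B_n^x(\rev w)|=|B_n^x(w)|$. Substituting this into \eqref{eq:pathdefects}, for every $m\ge1$,
\[
\widetilde\Delta^-_m(w)=2|B^{-1}_{2m-1}(w)|-|B^0_{2m}(w)|=\Delta^-_m(w).
\]

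Nothing here is a genuine obstacle. The only point needing care is the converse inclusion in Step 1, which rests on the disjointness of equal-length cylinders and the nonemptiness of each cylinder. It is also worth noting that the set equality in Step 2 holds for $\rev w$, while only the cardinalities agree for $w$ itself.
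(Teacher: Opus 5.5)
Your proof is correct and follows the paper's route: the paper derives the corollary from \eqref{eq:cylinder}, Proposition~\ref{Chengis}, and the reversal identity $|B_n^x(w)|=|B_n^x(\rev{w})|$, which are exactly your three steps. Your Step~1 also spells out the converse inclusion (disjointness and nonemptiness of equal-length cylinders), which the paper leaves implicit.
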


By Corollary~\ref{cor:pathexpansion} and Corollary~\ref{BB}: 

\begin{proposition}[{ \cite[Proposition 4.2]{Cheng}}]\label{Cheng}
For every $w\in\{0,1\}^{\star}$,
\begin{equation}\label{eq:expansion}
P_w(\N)=\frac12+\sum_{m\ge1}2^{-2m-1}\,\Delta^-_m(w).
%\qquad
%P_t(-\N)=\frac12+\sum_{m\ge1}2^{-2m-1}\,\Delta^+_m(w),
\end{equation}

\end{proposition}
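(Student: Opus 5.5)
The statement to prove is Proposition~\ref{Cheng}: $P_w(\N)=\tfrac12+\sum_{m\ge1}2^{-2m-1}\Delta^-_m(w)$. My plan is a direct chaining of the two preceding corollaries, with a reindexing check in between.

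\textbf{Step 1.} Start from Corollary~\ref{cor:pathexpansion}, which holds for the random walk stopped on any binary tree, and in particular on $T_w$:
\[
P_w(\N)=\tfrac12+\sum_{m\ge1}2^{-2m-1}\,\widetilde{\Delta}^-_m(w).
\]
Before using it, I will re-derive it from the second form of Corollary~\ref{cor:telescope}. That form gives $P_w(\N)=\tfrac12+\tfrac12\sum_{n\ge0}\bigl(\rho^w_{2n+1}(-1)-\rho^w_{2n+2}(0)\bigr)$. Setting $m=n+1$ and applying \eqref{eq:nodecount}, each summand becomes
\[
\tfrac12\Bigl(2^{-(2m-1)}|\widetilde B^{-1}_{2m-1}(w)|-2^{-2m}|\widetilde B^{0}_{2m}(w)|\Bigr)
=2^{-2m-1}\Bigl(2|\widetilde B^{-1}_{2m-1}(w)|-|\widetilde B^{0}_{2m}(w)|\Bigr),
\]
which is $2^{-2m-1}\widetilde\Delta^-_m(w)$ by \eqref{eq:pathdefects}. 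The sum is finite because $\rho^w_n=0$ for $n>\ell(w)$.

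\textbf{Step 2.} Replace $\widetilde\Delta^-_m(w)$ by $\Delta^-_m(w)$ using Corollary~\ref{BB}. To justify it, I will first use \eqref{eq:cylinder} together with Proposition~\ref{Chengis}:
\[
\{\tau_w>n\}=\bigcup_{u\in\Sub(\rev w),\ \ell(u)=n}[u].
\]
Since cylinders of the same length are either disjoint or equal, a word $u$ of length $n$ satisfies $[u]\subseteq\{\tau_w>n\}$ exactly when $u\sub\rev w$. Hence $\widetilde B^x_n(w)=B^x_n(\rev w)$. The reversal bijection then gives $|B^x_n(\rev w)|=|B^x_n(w)|$, and therefore $\widetilde\Delta^-_m(w)=\Delta^-_m(w)$ for every $m\ge1$.

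\textbf{Step 3.} Substitute the identity from Step 2 into the expansion from Step 1; this yields \eqref{eq:expansion}.

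The only real point of care is the index bookkeeping in Step 1: odd times $2m-1$ are paired with even times $2m$, the factor $2$ in $\widetilde\Delta^-_m$ absorbs the ratio of the cylinder masses, and the term $\rho^w_0(0)=1$ produces the constant $\tfrac12$ (as in the proof of Corollary~\ref{cor:telescope}). I expect no genuine obstacle here; the substantive content has already been handled by Proposition~\ref{Chengis}.
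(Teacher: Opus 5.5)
Your proposal is correct and follows the paper's route exactly. The paper obtains the proposition by chaining Corollary~\ref{cor:pathexpansion} (the reindexed second form of Corollary~\ref{cor:telescope} via \eqref{eq:nodecount}) with Corollary~\ref{BB} (from \eqref{eq:cylinder}, Proposition~\ref{Chengis} and the reversal bijection); your Steps 1 and 2 spell out the index bookkeeping and the cylinder-disjointness argument that the paper leaves implicit, and both are right.
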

At the end of this section, recall that the stopping time $\tau_u(w)$ returns the number of steps after which the path $w$ reaches any leaf of the tree $T_u$. Given $u\in\{0,1\}^\star$, let
\begin{equation}\label{exit}
\Lambda(u)=\bigl\{v\in\{0,1\}^\star:\ \ell(v)\ge1,\
v_1\dots v_{\ell(v)-1}\sub\rev{u},\ v\nsub\rev{u}\bigr\}
\end{equation}
denote the set of words labelling the leaves of the tree $T_u$. By definition, measures $P_u$ are given by a compact formula:

\begin{corollary}\label{cor:explicitP}
For any $u\in\{0,1\}^\star$, by $P_u=\mathcal{L}(S_{\tau_u})$,
\begin{equation}\label{explicitP}
P_u=\sum_{v\in\Lambda(u)}\Bigl(\frac12\Bigr)^{\ell(v)}\dd_{b(v)} .
\end{equation}
\end{corollary}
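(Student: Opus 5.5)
The statement is Corollary~\ref{cor:explicitP}, which follows from \eqref{eq:PisE} together with the first-exit description \eqref{tauw} of $\tau_u$. The plan is to decompose $\Omega$ according to the value of $\tau_u$ and the first $\tau_u$ coordinates of $\omega$, and to check that the pieces are exactly the cylinders $[v]$ with $v\in\Lambda(u)$.

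\textbf{Step 1: $\Lambda(u)$ indexes the possible stopped prefixes.} For $\omega\in\Omega$ put $n=\tau_u(\omega)$ and $v=\omega_1\dots\omega_n$. Since $\eps\sub\rev{u}$ always, \eqref{tauw} gives $n\ge1$. By minimality of $n$ in \eqref{tauw}, the prefix $v_1\dots v_{n-1}$ lies in $\Sub(\rev{u})$, while $v$ itself does not. Hence $v\in\Lambda(u)$. Finiteness is automatic, because $\tau_u\le\ell(u)+1$.

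\textbf{Step 2: the event $\{\pi_{\tau_u}=v\}$ is the cylinder $[v]$.} Conversely, take $v\in\Lambda(u)$ and $\omega\in[v]$. The set $\Sub(\rev{u})$ is closed under taking prefixes, since a prefix of a subword is a subword. So every proper prefix of $v$ is a subword of $\rev{u}$, because it is a prefix of $v_1\dots v_{\ell(v)-1}$. Since $v$ itself is not a subword, $\tau_u(\omega)=\ell(v)$ and the stopped prefix of $\omega$ is $v$. Together with Step 1, this shows that $\Omega=\bigsqcup_{v\in\Lambda(u)}[v]$. The union is disjoint: if $v\ne v'$ and $[v]\cap[v']\ne\varnothing$, then one word is a proper prefix of the other, say $v$ of $v'$. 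But then $v$ would be a non-subword proper prefix of $v'$, which contradicts $v'\in\Lambda(u)$.

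\textbf{Step 3: sum the masses.} On $[v]$ we have $S_{\tau_u}=S_{\ell(v)}=b(v)$ by \eqref{eq:Sonu}, and $\PP([v])=2^{-\ell(v)}$ by \eqref{eq:cylmass}. Therefore
\[
\law(S_{\tau_u})=\sum_{v\in\Lambda(u)}2^{-\ell(v)}\dd_{b(v)},
\]
which equals $P_u$ by \eqref{eq:PisE}.

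No step is a real obstacle. The only point needing care is prefix-closedness of $\Sub(\rev{u})$, which is what guarantees that the first exit occurs exactly at $\ell(v)$ and not earlier. Alternatively, one could use prefix-closedness of $N(T_u)$ from Definition~\ref{def:nodelanguage} together with Proposition~\ref{Chengis}.
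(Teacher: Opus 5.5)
Your proposal is correct and is essentially the paper's argument: the paper's proof just notes that each leaf path $v\in\Lambda(u)$ has probability $2^{-\ell(v)}$ and ends at position $b(v)$. You additionally write out the details the paper leaves implicit, namely that the cylinders $[v]$, $v\in\Lambda(u)$, partition $\Omega$, with $\tau_u=\ell(v)$ on $[v]$ by prefix-closedness of $\Sub(\rev{u})$.
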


\begin{proof}
It is enough to note that $b(v)$ is the position of the walk at the end of a path $v\in\{0,1\}^\star$, and $(\frac{1}{2})^{\ell(v)}$ is the probability of this path.
\end{proof}

\section{Properties of $T_w$, and the formula for $P_w(\N)$.}\label{criterion}
In this section we  express explicitely  the relation between $|B^{x-1}_{n-1}|$ and $|B^x_n|$. The case $x=0$ is of special interests - it allows to control the value of $\Delta_m^-$. In the proof, as in \cite{Cheng}, we count the words with an inserted/deleted digit.\\

 Fix $w\in\{0,1\}^{\star}$ so we may write shortly $B_n^{x}=B_n^{x}(w)$, $\Delta^{-}_m=\Delta^{-}_m(w)$,
$\rho_n(x)=\rho^{w}_n(x)$.  For $n\ge1$ and $x\in\Z$ let
\begin{equation}\label{eq:failing}
F_1(n,x)\;=\;|\bigl\{(v,q):\ v\in B_{n-1}^{x-1},\
0\le q\le n-1,\ \iota^{1}_{q}(v)\nsub w\bigr\}|.
\end{equation}
The $F_1(n,x)$ counts all the situations in which insertion a digit $1$ to some slot of some word $v\in B_{n-1}^{x-1}$ produces a word outside $\Sub(w)$.

\begin{proposition}\label{prop:markedcount}
For every $n\ge1$ and $x\in\Z$,
\begin{equation}\label{eq:markedcount}
\frac{n+x}{2}\,\bigl|B_n^{x}\bigr|\;+\;F_1(n,x)
\;=\;n\,\bigl|B_{n-1}^{x-1}\bigr| .
\end{equation}
\end{proposition}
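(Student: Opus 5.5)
We prove \eqref{eq:markedcount} by double counting the set of pairs
\[
M\;=\;\bigl\{(v,q):\ v\in B_{n-1}^{x-1},\ 0\le q\le n-1\bigr\},
\]
which has exactly $n\,|B_{n-1}^{x-1}|$ elements, since a word of length $n-1$ has $n$ insertion slots (Definition~\ref{def:slots}). Each pair $(v,q)\in M$ yields the word $\iota^1_q(v)$, which has length $n$ and balance $(x-1)+1=x$. We split $M$ into two parts. The pairs with $\iota^1_q(v)\nsub w$ are counted by $F_1(n,x)$ by definition \eqref{eq:failing}. The pairs with $\iota^1_q(v)\sub w$ have $\iota^1_q(v)\in B_n^x$. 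It therefore suffices to show that the second part has exactly $\frac{n+x}{2}|B_n^x|$ elements.

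For the second part we use the map $(v,q)\mapsto(u,q+1)$, where $u=\iota^1_q(v)$ and $q+1$ is the position of the inserted digit $1$ in $u$. Since $u_{q+1}=1$, this sends pairs to pairs $(u,p)$ with $u\in B_n^x$ and $p$ a position of $u$ carrying the digit $1$. The inverse map is $(u,p)\mapsto(v,p-1)$, where $v$ is $u$ with the $p$-th letter deleted. Here we need $v\in B_{n-1}^{x-1}$. This holds because deleting a $1$ lowers the balance by one, and because $v\sub u\sub w$ by transitivity of $\sub$, with $\ell(v)=n-1$.

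The two maps are mutually inverse. Deleting the letter at position $q+1$ of $\iota^1_q(v)$ returns $v$. Conversely, inserting $1$ into slot $p-1$ of $u$ with its $p$-th letter removed returns $u$, since $u_p=1$. The second part is thus in bijection with
\[
\{(u,p):\ u\in B_n^x,\ u_p=1\}.
\]
Every $u\in B_n^x$ has exactly $\lo(u)=\frac{n+x}{2}$ ones, so this set has $\frac{n+x}{2}|B_n^x|$ elements, which gives \eqref{eq:markedcount}.

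The main subtlety is the remark after Definition~\ref{def:slots}: different slots may produce the same word, for example inserting $1$ on either side of an existing $1$. For this reason we count marked pairs rather than words. Recording the inserted position makes the correspondence injective, and each occurrence of $1$ in $u$ is counted separately. The degenerate cases, such as $B_{n-1}^{x-1}=\varnothing$, or parity mismatches making both sides zero, are covered automatically by the same bijection.
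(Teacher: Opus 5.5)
Your proof is correct and takes the same approach as the paper. Both arguments split the $n\,|B_{n-1}^{x-1}|$ pairs $(v,q)$ into failing and admissible insertions, and put the admissible ones in bijection with the words $u\in B_n^x$ carrying a marked digit $1$, via the mutually inverse maps $(v,q)\mapsto(\iota^1_q(v),q+1)$ and "delete the marked $1$".
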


\begin{proof}
If $x\not\equiv n\pmod2$, all three terms vanish by
\eqref{eq:levelparity}. The same in the case $|x|>n$. Assume that
$x\equiv n\pmod2$, and that $|x|\leq n$. Every $u\in B_n^{x}$ has exactly
$\lo(u)=\frac{n+x}{2}$ ones. Consider the set of marked
words
\[
M\;=\;\bigl\{(u,i):\ u\in B_n^{x},\ 1\le i\le n,\ u_i=1\bigr\},
\]
in which the second coordinate marks one of the ones of $u$, so
that $|M|=\frac{n+x}{2}\,|B_n^{x}|$, and define the set of admissible
marked slots
\[
I\;=\;\bigl\{(v,q):\ v\in B_{n-1}^{x-1},\ 0\le q\le n-1,\
\iota^{1}_{q}(v)\sub w\bigr\}.
\]
For the proof it will be enough to note that $|M|=|I|$, which follows from the fact  that the following two mappings:
\[
\delta\colon M\ni (u,i)\longmapsto\bigl(u_1\dots u_{i-1}u_{i+1}\dots u_{n},\;
i-1\bigr) \in I, 
\qquad
\gamma\colon  I\ni (v,q)\longmapsto \bigl(\iota^{1}_{q}(v),\;q+1\bigr)\in M
\]
are mutually inverse bijections between $M$ and $I$: the $\gamma$ insert a digit $1$ into a word $v\in B^{x-1}_{n-1}$ and the composition $\delta\circ\gamma$  deletes the digit $1$ which was just
inserted (and records the same slot at the end), and $\gamma\circ\delta$ re-inserts the
deleted digit into its original position ( returning the same slot at the end ). Since every $v\in B_{n-1}^{x-1}$ has length $n-1$ and therefore exactly
$n$ slots, the definition \eqref{eq:failing} gives $|I|=n\,|B_{n-1}^{x-1}|-F_1(n,x)$. By $|I|=|M|=\frac{n+x}{2}|B_n^x|$, we have \eqref{eq:markedcount}.
\end{proof}

\begin{theorem}\label{thm:domination}
For every $w\in\{0,1\}^{\star}$ and every $n\ge1$,
\begin{equation}\label{eq:domination}
\rho_n(x)\ \le\ \rho_{n-1}(x-1),\ x\ge0,\mbox{ \  and \ } \rho_n(x)\ \le\ \rho_{n-1}(x+1),\ x\le0.
\end{equation}
Equivalently,
\begin{equation}\label{eq:dominationcount}
\bigl|B_n^{x}\bigr|\ \le\ 2\,\bigl|B_{n-1}^{x-1}\bigr|\quad(x\ge0),
\qquad
\bigl|B_n^{x}\bigr|\ \le\ 2\,\bigl|B_{n-1}^{x+1}\bigr|\quad(x\le0).
\end{equation}
\end{theorem}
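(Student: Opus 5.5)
The plan is to read the count form \eqref{eq:dominationcount} off Proposition~\ref{prop:markedcount} by discarding the nonnegative term $F_1(n,x)$. The case $x\le0$ then follows from the complement symmetry of the level sets, and the $\rho$ form \eqref{eq:domination} follows from Corollary~\ref{BB} and \eqref{eq:nodecount}.

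\emph{Case $x\ge0$ of \eqref{eq:dominationcount}.} Fix $w$, $n\ge1$ and $x\ge0$. Since $F_1(n,x)\ge0$ as a cardinality, \eqref{eq:markedcount} gives
\[
\frac{n+x}{2}\,\bigl|B_n^{x}\bigr|\;\le\; n\,\bigl|B_{n-1}^{x-1}\bigr|.
\]
For $x\ge0$ we have $\frac{n+x}{2}\ge\frac n2>0$, so $\frac n2|B_n^x|\le n|B_{n-1}^{x-1}|$. Dividing by $n/2$ yields $|B_n^x|\le 2|B_{n-1}^{x-1}|$. The parity and $|x|>n$ cases need no separate treatment, because they are already covered inside Proposition~\ref{prop:markedcount}.

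\emph{Case $x\le0$ of \eqref{eq:dominationcount}.} Here I will not redo the marked-word argument with the digit $0$. Instead I use the identity $|B_n^{x}(w)|=|B_n^{-x}(\mir{w})|$ from Section~\ref{sec:nodes}. Since $-x\ge0$, the previous case applied to the word $\mir{w}$ gives
\[
|B_n^{x}(w)|=|B_n^{-x}(\mir{w})|\le 2|B_{n-1}^{-x-1}(\mir{w})|=2|B_{n-1}^{x+1}(w)|.
\]
This is the second inequality.

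\emph{Equivalence with \eqref{eq:domination}.} By Corollary~\ref{BB} and \eqref{eq:nodecount}, $\rho^w_n(x)=2^{-n}|B_n^x(\rev{w})|$. Hence $\rho^w_n(x)\le\rho^w_{n-1}(x\mp1)$ is equivalent to $|B_n^x(\rev{w})|\le 2|B_{n-1}^{x\mp1}(\rev{w})|$. The count inequalities were proved for an arbitrary word, so they hold in particular for $\rev{w}$. Alternatively, one can use $|B_n^x(\rev{w})|=|B_n^x(w)|$ directly.

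I do not expect a real obstacle. The only point needing care is that the bound $\frac{n+x}{2}\ge \frac n2$ uses $x\ge0$ essentially, which is why the negative side has to be obtained through the complement $\mir{w}$ rather than from the same inequality.
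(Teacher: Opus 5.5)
Your proposal is correct and follows the paper's proof essentially verbatim. You drop $F_1(n,x)\ge0$ in \eqref{eq:markedcount}, use $\frac{n+x}{2}\ge\frac n2$ for $x\ge0$, get the case $x\le0$ from the complement $\mir{w}$, and pass to the $\rho$ form through \eqref{eq:nodecount} and Corollary~\ref{BB}; the only cosmetic difference is that you let Proposition~\ref{prop:markedcount} absorb the parity cases rather than handling them separately.
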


\begin{proof}
The two equations  are equivalent by \eqref{eq:nodecount} and Corollary~\ref{BB}. If
$x\not\equiv n\pmod2$ then $B^n_x=\varnothing$. Assume that $x\equiv n\pmod2$, and consider the case $x\ge0$.
By \eqref{eq:markedcount},
$$
\frac{n+x}{2}\,\bigl|B_n^{x}\bigr|\leq\;n\,\bigl|B_{n-1}^{x-1}\bigr| , $$
 and, by $n\geq1$ and $x\geq0$,
\[
\bigl|B_n^{x}\bigr|\ \le\ \frac{2n}{n+x}\,\bigl|B_{n-1}^{x-1}\bigr|
\ \le\ 2\,\bigl|B_{n-1}^{x-1}\bigr|.
\]
 This
proves the first inequality in \eqref{eq:dominationcount}. The second
follows by considering the complement $\mir{w}$:
$$|B_n^{x}(w)|=|B_n^{-x}(\mir{w})|\leq 2\,|B_{n-1}^{-x-1}(\mir{w})|=2\,|B_{n-1}^{x+1}(w)|\mbox{ for }x\le0.$$
\end{proof}

\begin{corollary}\label{cor:defects}
For every $w\in\{0,1\}^{\star}$ and $m\ge1$:
\begin{enumerate}[label=\textup{(\alph*)}]
\item\label{def:nonneg} $\rho_{2m}(0)\le\rho_{2m-1}(-1)$, and hence $\Delta^-_m(w)\geq 0$
\item\label{def:exact} $m\,\Delta^-_m(w)=F_1(2m,0)$, and hence $\Delta^-_m(w)=0\Leftrightarrow F_1(2m,0)=0$.
\end{enumerate}
%The complementary statements hold for $\Delta^+_m(w)$, with the digit $1$
%replaced by $0$ in the marked count.
\end{corollary}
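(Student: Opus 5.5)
We specialise Proposition~\ref{prop:markedcount} to the level $n=2m$, $x=0$, and combine it with Theorem~\ref{thm:domination} at $x=0$.

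For part \ref{def:nonneg}, take $n=2m$ and $x=0$ in the second inequality of \eqref{eq:domination}, the one stated for $x\le 0$. This gives $\rho_{2m}(0)\le\rho_{2m-1}(1)$. That is not yet the statement, which needs position $-1$. To get it, apply the first inequality of \eqref{eq:domination} with $x=0\ge0$, which reads $\rho_{2m}(0)\le\rho_{2m-1}(-1)$; this is exactly the claim. In counting form, the first inequality of \eqref{eq:dominationcount} at $n=2m$, $x=0$ gives $|B^0_{2m}|\le 2|B^{-1}_{2m-1}|$. Together with the definition \eqref{eq:defects}, this yields $\Delta^-_m(w)\ge0$.

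For part \ref{def:exact}, substitute $n=2m$, $x=0$ into \eqref{eq:markedcount}:
\[
m\,|B^0_{2m}|+F_1(2m,0)=2m\,|B^{-1}_{2m-1}|.
\]
Rearranging gives $F_1(2m,0)=m\bigl(2|B^{-1}_{2m-1}|-|B^0_{2m}|\bigr)=m\,\Delta^-_m(w)$. Since $m\ge1$, we get $\Delta^-_m(w)=0$ if and only if $F_1(2m,0)=0$.

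One could instead argue at the level of $\rho$ via \eqref{eq:nodecount} and Corollary~\ref{BB}, but the counting route is the direct one. The only point needing care is a bookkeeping check: $B_n^x$ refers to $w$ itself, not to $\rev{w}$. This is harmless because $|B_n^x(w)|=|B_n^x(\rev{w})|$, so $\Delta^-_m$ is the same whichever convention is used. There is no real obstacle here.
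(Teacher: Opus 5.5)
Your proof is correct and follows the paper's argument: part (a) is the $x\ge0$ case of Theorem~\ref{thm:domination} at $(n,x)=(2m,0)$, and part (b) is \eqref{eq:markedcount} at the same point, rearranged using \eqref{eq:defects}. Your opening appeal to the $x\le0$ inequality only gives $\rho_{2m}(0)\le\rho_{2m-1}(1)$, which is not needed, so you can delete it; note also that (a) already follows from (b), since $F_1(2m,0)\ge0$.
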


\begin{proof}
Let $(n,x)=(2m,0)$. Theorem~\ref{thm:domination} implies  \ref{def:nonneg}. Equation \ref{def:exact} follows from \eqref{eq:markedcount} and the definiton of $\Delta_{m}^-(w)$.
\end{proof}

By Proposition~\ref{Cheng} and Corollary~\ref{cor:defects}, we have $P_w(\N)\geq\frac12$,  and:
\begin{corollary}
$P_w(\N)=\frac12$ iff  $F_1(2m,0)=0$ for any $m\geq1$.
\end{corollary}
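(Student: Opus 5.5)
The plan is to combine the expansion of Proposition~\ref{Cheng} with the two parts of Corollary~\ref{cor:defects}; no new combinatorics is needed. By Proposition~\ref{Cheng},
\[
P_w(\N)-\tfrac12=\sum_{m\ge1}2^{-2m-1}\,\Delta^-_m(w).
\]
The sum is finite: $B^{0}_{2m}(w)$ and $B^{-1}_{2m-1}(w)$ are empty once $2m-1>\ell(w)$, so $\Delta^-_m(w)=0$ for all large $m$.

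First I will use Corollary~\ref{cor:defects}\ref{def:nonneg}, which gives $\Delta^-_m(w)\ge0$ for every $m\ge1$. Every summand above is therefore a nonnegative integer multiplied by the positive weight $2^{-2m-1}$. In particular $P_w(\N)\ge\frac12$.

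Second, a finite sum of nonnegative terms with strictly positive weights vanishes if and only if each term vanishes. Hence $P_w(\N)=\frac12$ holds if and only if $\Delta^-_m(w)=0$ for all $m\ge1$.

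Third, I will translate this condition through Corollary~\ref{cor:defects}\ref{def:exact}. That part states $m\,\Delta^-_m(w)=F_1(2m,0)$, which comes from \eqref{eq:markedcount} at $(n,x)=(2m,0)$: there $\frac{n+x}{2}=m$, so $m|B^0_{2m}|+F_1(2m,0)=2m|B^{-1}_{2m-1}|$. Since $m\ge1$, we get $\Delta^-_m(w)=0$ if and only if $F_1(2m,0)=0$. Chaining the three equivalences yields the statement.

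There is no real obstacle here. The only points to check are that the weights $2^{-2m-1}$ are strictly positive and that the index range $m\ge1$ matches in Proposition~\ref{Cheng} and Corollary~\ref{cor:defects}; both hold. The substantive work, namely showing that $F_1(2m,0)=0$ for all $m$ is equivalent to $w$ being saturated, belongs to the subsequent analysis toward Theorem~\ref{main}.
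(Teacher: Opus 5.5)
Your proposal is correct and follows essentially the same route as the paper: you combine the expansion of Proposition~\ref{Cheng} with the nonnegativity $\Delta^-_m(w)\ge0$ and the identity $m\,\Delta^-_m(w)=F_1(2m,0)$ from Corollary~\ref{cor:defects}. Your finiteness remark and the check of \eqref{eq:markedcount} at $(n,x)=(2m,0)$ make explicit what the paper leaves implicit.
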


The above means that $P_w(\N)=\frac12$ iff for any $m\geq1$ with $B_{2m-1}^{-1}(w)\neq\varnothing$, any word $v\in B_{2m-1}^{-1}(w)$ is 
such that $\iota^{1}_{j}(v)\sub w$ for every slot $j\in\{0,1,\dots,2m-1\}$ - inserting a digit $1$ to any slot of any $v\in B_{2m-1}^{-1}(w)$ produces a subword of $w$. Also, note that $v\in B_{2m-1}^{-1}(w)$ for some $m\geq 1$ iff $v$ is a subword of $w$ with $\lz(v)=\lo(v)+1$. Hence:

\begin{corollary}\label{cor:star}
$P_w(\N)=\frac12$ if and only if for any $v\sub w$ with $\lz(v)=\lo(v)+1$, every insertion of a single digit $1$ into $v$ produces a subword of $w$.

\end{corollary}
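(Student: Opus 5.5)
The plan is to read Corollary~\ref{cor:star} directly off the preceding corollary, which says that $P_w(\N)=\frac12$ if and only if $F_1(2m,0)=0$ for every $m\ge1$. The work consists of unfolding the definition \eqref{eq:failing} of $F_1$ at the point $(n,x)=(2m,0)$, and then re-indexing the union over $m$ of the level sets $B_{2m-1}^{-1}(w)$.

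\emph{Step 1.} By \eqref{eq:failing} with $(n,x)=(2m,0)$, $F_1(2m,0)$ is the number of pairs $(v,q)$ with $v\in B_{2m-1}^{-1}(w)$ and $0\le q\le 2m-1$ such that $\iota^{1}_{q}(v)\nsub w$. Here $q$ ranges over all $2m$ insertion slots of $v$, since $\ell(v)=2m-1$. Because $F_1(2m,0)$ is the cardinality of a finite set, it vanishes exactly when that set is empty. So $F_1(2m,0)=0$ holds precisely when inserting a digit $1$ into any slot of any $v\in B_{2m-1}^{-1}(w)$ gives a subword of $w$. Combined with the preceding corollary, $P_w(\N)=\frac12$ holds if and only if this insertion property holds for every $v\in\bigcup_{m\ge1}B_{2m-1}^{-1}(w)$.

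\emph{Step 2.} I identify this union with $\{v\sub w:\ \lz(v)=\lo(v)+1\}$.
\begin{itemize}
\item If $v\in B_{2m-1}^{-1}(w)$, then $v\sub w$, $\ell(v)=2m-1$ and $b(v)=-1$. Hence $\lz(v)=\frac{(2m-1)+1}{2}=m$ and $\lo(v)=m-1$, so $\lz(v)=\lo(v)+1$.
\item Conversely, let $v\sub w$ with $\lz(v)=\lo(v)+1$. Put $m=\lz(v)\ge1$. Then $\ell(v)=2m-1$ and $b(v)=-1$, so $v\in B_{2m-1}^{-1}(w)$.
\end{itemize}
Substituting this description into Step 1 gives exactly the statement of Corollary~\ref{cor:star}.

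I do not expect a genuine obstacle here; the only point requiring care is bookkeeping. In particular, ``every insertion'' must mean every slot $j\in\{0,\dots,\ell(v)\}$, matching the range $0\le q\le n-1$ in \eqref{eq:failing}. Insertions into different slots that produce the same word cause no issue, since each such pair is counted separately in $F_1$ and the condition concerns only the resulting words.
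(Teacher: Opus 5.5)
Your proposal is correct and follows the paper's own argument: it unfolds the criterion $P_w(\N)=\frac12 \Leftrightarrow F_1(2m,0)=0$ for all $m\ge1$ via the definition of $F_1$, then identifies $\bigcup_{m\ge1}B_{2m-1}^{-1}(w)$ with the set of subwords $v\sub w$ satisfying $\lz(v)=\lo(v)+1$. Your bookkeeping of the slot range $0\le q\le 2m-1$ and of the re-indexing $m=\lz(v)$ is accurate.
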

%\end{equation}
%Moreover, if \eqref{eq:star} fails for some $v$ with $\lz(v)=m$, then
%$P_t(\N)\ge\frac12+2^{-2m-1}$.
%\end{corollary}

%\begin{corollary}
%$P_w(\N)=\frac12$ iff  $F_1(2m,0)=0$ for any $m\geq1$. In words: $P_w(\N)=\frac12$ iff for any $m\geq1$ with $B_{2m-1}^{-1}(w)\neq\varnothing$, any word $v\in B_{2m-1}^{-1}(w)$ is 
%such that $\iota^{1}_{j}(v)\sub w$ for every slot $j\in\{0,1,\dots,2m-1\}$ (inserting a digit $1$ to any slot of $v$ produces a subword of $w$ )
%\end{corollary}

%\begin{proposition}[Insertion criterion]\label{prop:criterion}
%Let $w\in\{0,1\}^{\star}$ and $m\ge1$. Then:\\
%
%
%\label{crit:strict} if some pair $(v,j)$ with
%$v\in B_{2m-1}^{-1}(w)$ and $0\le j\le 2m-1$ satisfies
%$\iota^{1}_{j}(v)\nsub w$, then $\Delta^-_m(w)\ge1$.
%
%The complementary statements hold for $\Delta^+_m(w)$, with the roles of the digits $0$
%and $1$ interchanged.
%%\end{proposition}
%
%\begin{proof}
%\ref{crit:nonneg} is Corollary~\ref{cor:defects}\ref{def:nonneg}.
%\ref{crit:eq}: by Corollary~\ref{cor:defects}\ref{def:exact},
%$\Delta^-_m(w)=0$ if and only if $F_1(2m,0)=0$, i.e.\ if and only if no
%pair $(v,j)$ with $v\in B_{2m-1}^{-1}(w)$ fails.
%\ref{crit:strict}: one failing pair gives $F_1(2m,0)\ge1$, hence
%$m\,\Delta^-_m(w)=F_1(2m,0)\ge1$; as $\Delta^-_m(w)$ is an integer,
%$\Delta^-_m(w)\ge1$.
%\end{proof}

\begin{example}\label{ex:LR}
Let $w=01$. The subwords of $w$ are: $\eps,0,1,01$. We have:
$$P_w(\N)=P_{(1w1)_2(\N)}=P_{11}(\N)=\frac58>\frac12,$$
because the word $v=0$ satisfies $v\sub w$, and $l_0(v)=l_1(v)+1$, but $v1=01\nsub 01.$

\end{example}

\begin{example}\label{ex:RLR}
Let $w=101$, $t=27$. Here: $\Sub(w)=\{\eps,0,1,01,10,11,101\}$, and $P_{27}(\N)=\frac12$.
\end{example}

Before we move to the proof of Theorem \ref{main}, we will express how the words $v$ which fails the property from Corollary \ref{cor:star} increase the total mass of $\N$ - this will provide the lower bound for $P_w(\N)$ in case of nonsaturated words.\\

For $w\in\{0,1\}^{\star}$ and $m \ge 1$ define the set of deficient words  at level $m$:
\[
D_m(w) \;=\; \bigl\{\, v \in B^{-1}_{2m-1}(w) \;:\; \iota^{1}_{q}(v) \nsub w
\ \text{for some } q \in \{0,1,\dots,2m-1\} \,\bigr\},
\qquad d_m(w) = \bigl|D_m(w)\bigr|.
\]

\begin{theorem}\label{thm:cumulative}
For every $w \in \{0,1\}^{\star}$,
\begin{equation}\label{eq:exact-FR}
P_w(\N) \;=\; \frac12 \;+\; \sum_{m \ge 1} \frac{2^{-2m-1}}{m}\, F_1(2m,0),
\end{equation}
and hence
\begin{equation}\label{eq:cumulative}
P_w(\N) \;\ge\; \frac12 \;+\; \sum_{m \ge 1} 2^{-2m-1}
\left\lceil \frac{d_m(w)}{m} \right\rceil .
\end{equation}
\end{theorem}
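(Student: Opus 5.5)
The plan is to read off \eqref{eq:exact-FR} directly from Proposition~\ref{Cheng} and Corollary~\ref{cor:defects}\ref{def:exact}, and then bound each term from below by a counting argument.

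First, Corollary~\ref{cor:defects}\ref{def:exact} gives $m\,\Delta^-_m(w)=F_1(2m,0)$ for every $m\ge1$, i.e.\ $\Delta^-_m(w)=F_1(2m,0)/m$. We substitute this into the expansion $P_w(\N)=\frac12+\sum_{m\ge1}2^{-2m-1}\Delta^-_m(w)$ of Proposition~\ref{Cheng}, and \eqref{eq:exact-FR} follows at once. The sum is finite because $B^{-1}_{2m-1}(w)=\varnothing$ for $2m-1>\ell(w)$, which forces $F_1(2m,0)=0$ for large $m$.

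For \eqref{eq:cumulative} it suffices to prove, for each $m$, that $\Delta^-_m(w)\ge\lceil d_m(w)/m\rceil$; all terms are nonnegative by Corollary~\ref{cor:defects}\ref{def:nonneg}, so the termwise bounds can be summed.

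1. By definition, $F_1(2m,0)$ counts the pairs $(v,q)$ with $v\in B^{-1}_{2m-1}$ and $\iota^1_q(v)\nsub w$.
2. Each $v\in D_m(w)$ contributes at least one such pair, and different $v$ give different pairs. Hence $F_1(2m,0)\ge d_m(w)$.
3. It follows that $\Delta^-_m(w)=F_1(2m,0)/m\ge d_m(w)/m$.
4. Finally, $\Delta^-_m(w)=2|B^{-1}_{2m-1}|-|B^0_{2m}|$ is an integer by its definition \eqref{eq:defects}. An integer that is at least $d_m(w)/m$ is at least $\lceil d_m(w)/m\rceil$.

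There is no real obstacle here. The only point needing care is to use the integrality of $\Delta^-_m$ itself rather than that of $F_1(2m,0)/m$; this is what allows the ceiling.
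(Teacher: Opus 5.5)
Your proposal is correct and follows the paper's proof essentially verbatim. You substitute $m\,\Delta^-_m(w)=F_1(2m,0)$ from Corollary~\ref{cor:defects} into Proposition~\ref{Cheng}, bound $F_1(2m,0)\ge d_m(w)$ by the fact that $(v,q)\mapsto v$ maps onto $D_m(w)$, and use the integrality of $\Delta^-_m(w)$ to obtain the ceiling.
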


\begin{proof}
 The identity \eqref{eq:exact-FR} follows from Proposition~\ref{Cheng}:
\begin{equation}\label{eq:median-expansion-again}
P_w(\N) \;=\; \frac12 + \sum_{m \ge 1} 2^{-2m-1}\, \Delta^-_m(w),
\end{equation}
and Corollary~\ref{cor:defects}\,\ref{def:exact}: 
\begin{equation}\label{eq:mDelta}
m\, \Delta^-_m(w) \;=\; F_1(2m,0), m\geq 0.
\end{equation}

Now, by definition,  the map 
$$F_1(2m,0)\ni(v,q) \mapsto v\in D_m(w)$$ is a surjection, and hence
\begin{equation}\label{eq:FRged}
F_1(2m,0) \;\ge\; d_m(w), \qquad m \ge 1.
\end{equation}
Combining \eqref{eq:mDelta} and \eqref{eq:FRged} gives
$\Delta^-_m(w) \ge d_m(w)/m$. Since $\Delta^-_m(w)$ is an integer ( \eqref{eq:defects}), we get:
\begin{equation}\label{eq:ceiling}
\Delta^-_m(w) \;\ge\; \left\lceil \frac{d_m(w)}{m} \right\rceil, \qquad m \ge 1.
\end{equation}

\end{proof}

\begin{corollary}\label{star}
 If  for some $v\sub w$ with $\lz(v)=m=\lo(v)+1$, an insertion of a single digit $1$ into some slot of $v$ produces a word that is not a subword of $w$, then
$P_t(\N)\ge\frac12+2^{-2m-1}$.
\end{corollary}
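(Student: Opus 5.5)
This is immediate from Theorem~\ref{thm:cumulative}. The plan is to feed the single failing word $v$ into the lower bound \eqref{eq:cumulative}, and then drop every other summand using nonnegativity.

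First, I place $v$ in the right level set. Let $v\sub w$ with $\lz(v)=m$ and $\lo(v)=m-1$. Then $\ell(v)=2m-1$ and $b(v)=-1$, so $v\in B^{-1}_{2m-1}(w)$. Since $m=\lo(v)+1\ge 1$, the index $m$ is a legitimate level.

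Next, by hypothesis there is a slot $q\in\{0,\dots,2m-1\}$ with $\iota^1_q(v)\nsub w$. Hence $v\in D_m(w)$, so $d_m(w)\ge1$ and $\lceil d_m(w)/m\rceil\ge1$. Equivalently, the pair $(v,q)$ is counted in $F_1(2m,0)$, so $F_1(2m,0)\ge1$. By Corollary~\ref{cor:defects}\ref{def:exact}, $m\,\Delta^-_m(w)\ge1$. Since $\Delta^-_m(w)$ is an integer, this gives $\Delta^-_m(w)\ge1$.

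Finally, I apply \eqref{eq:cumulative}, or equivalently Proposition~\ref{Cheng}. All summands are nonnegative, because $\Delta^-_{m'}(w)\ge0$ for every $m'$ by Corollary~\ref{cor:defects}\ref{def:nonneg}. So I may discard every term except the one indexed by $m$, which yields $P_w(\N)\ge\frac12+2^{-2m-1}$.

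The statement writes $P_t$ with $t=(1w1)_2$, and $P_t=P_w$ by the identification of Section~\ref{sec:prelim}. There is no real obstacle here. The only points needing care are the bookkeeping that $\ell(v)=2m-1$ and $b(v)=-1$, and the integrality step that upgrades $\Delta^-_m(w)\ge 1/m$ to $\Delta^-_m(w)\ge1$.
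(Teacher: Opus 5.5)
Your proof is correct and matches the paper's argument: $v\in D_m(w)$ gives $d_m(w)\ge1$, and keeping only the $m$-th summand of \eqref{eq:cumulative} (the others are nonnegative) yields the bound. Your extra step through $F_1(2m,0)\ge1$ and the integrality of $\Delta^-_m(w)$ just spells out the ceiling argument that is already built into \eqref{eq:cumulative}.
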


\begin{proof}
 By the assumption, $v\in D_m(w)$, so $d_m(w)\ge1$
and \eqref{eq:cumulative} gives
\[
P_t(\N)\;\ge\;\frac12+2^{-2m-1}\left\lceil\frac{d_m(w)}{m}\right\rceil
\;\ge\;\frac12+2^{-2m-1}.
\qedhere
\]
\end{proof}

\section{Proof of Theorem \ref{main}}\label{sec:proof}

We start with twe following simple observation - the proof is an easy exercise.

\begin{observation}\label{greedy} Assume that $w\in\{0,1\}^{\star}$ has $k=\lz(w)$ zeros and block profile $(a_0,\dots,a_k)$, that is, $w=1^{a_0}0\,1^{a_1}\cdots 0\,1^{a_k}$.
\begin{enumerate}
\item  If $a_i\ge k$ for every $0\le i\le k$, then every word $u$ with
$\lz(u)\le k$ and $\lo(u)\le k$ satisfies $u\sub w$.
\item  Assume that some $u\in\{0,1\}^\star$ with $\lz(u)=\lz(w)=k\ge1$ has block profile $(c_0,\dots,c_k)$, i.e.\
$u=1^{c_0}0\,1^{c_1}\cdots 0\,1^{c_k}$. Then
\[
u\sub w
\quad\Longleftrightarrow\quad
c_j\le a_j\ \text{ for every }0\le j\le k .
\]
\end{enumerate}

\end{observation}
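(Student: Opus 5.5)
We prove the two parts separately. Both rest on one construction: an embedding of $u$ into $w$ that aligns zeros with zeros and maps each block of ones of $u$ into the corresponding block of ones of $w$.

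For part (1), take $u$ with $r:=\lz(u)\le k$ zeros and $\lo(u)\le k$ ones, and write its block profile as $u=1^{c_0}0\,1^{c_1}\cdots 0\,1^{c_r}$. Every block length satisfies $c_j\le\lo(u)\le k\le a_j$.

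\begin{itemize}
\item For $1\le j\le r$, we map the $j$-th zero of $u$ to the $j$-th zero of $w$. This is possible since $r\le k$.
\item For $0\le j\le r$, we map the $c_j$ ones of the $j$-th block of $u$ to the first $c_j$ positions of the block $1^{a_j}$ of $w$. This is possible since $c_j\le a_j$.
\end{itemize}

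The block $1^{a_j}$ lies strictly between the $j$-th and the $(j+1)$-st zero of $w$, or before the first zero when $j=0$. Hence the chosen positions are strictly increasing and the letters match. This gives an embedding, so $u\sub w$. The blocks $a_{r+1},\dots,a_k$ and the remaining zeros of $w$ are simply unused.

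For part (2), the implication ``$\Leftarrow$'' is exactly the same block-wise embedding with $r=k$, now using the hypothesis $c_j\le a_j$ directly.

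For ``$\Rightarrow$'', fix an embedding $i_1<\dots<i_n$ of $u$ into $w$. The $k$ zeros of $u$ must go to zeros of $w$, injectively and in increasing order. Since $w$ has exactly $k$ zeros, this assignment is the order-preserving bijection: the $j$-th zero of $u$ goes to the $j$-th zero of $w$. Now take a one in the $j$-th block of $u$, that is, one lying after the $j$-th zero and before the $(j+1)$-st zero of $u$ (with the obvious meaning for $j=0$ and $j=k$). Monotonicity of the embedding forces its image to lie strictly between the $j$-th and $(j+1)$-st zeros of $w$. That image is a one, so it lies in the block $1^{a_j}$. The embedding is injective, so the $c_j$ ones of the $j$-th block of $u$ occupy $c_j$ distinct positions of $1^{a_j}$, giving $c_j\le a_j$ for every $0\le j\le k$.

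The only point needing care is this rigidity of the zero matching, i.e.\ that equal zero counts force the $j$-th zero to map to the $j$-th zero. It follows at once from injectivity and monotonicity of embeddings, so no real obstacle is expected.
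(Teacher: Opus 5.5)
Your proof is correct and complete. The block-wise embedding (the $j$-th zero of $u$ goes to the $j$-th zero of $w$, and the $j$-th block of ones goes into $1^{a_j}$) gives part (1) and the ``$\Leftarrow$'' direction of part (2), and the forced order-preserving matching of zeros when $\lz(u)=\lz(w)$ gives ``$\Rightarrow$''. The paper states this observation without proof as an easy exercise, and your argument is the natural direct one it has in mind.
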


We will need the following lemma.

\begin{lemma}\label{lem:deficiency}
Let $w\in\{0,1\}^{\star}$ be non-saturated and put
$m^{\ast}=\min(\lz(w),\lo(w)+1)$. Then there exist a word $v\sub w$ with
$\lz(v)=m^{\ast}$, $\lo(v)=m^{\ast}-1$ and a slot
$q\in\{0,1,\dots,2m^{\ast}-1\}$ such that $\iota^{1}_{q}(v)\nsub w$.
\end{lemma}

\begin{proof}
Since $w$ is non-saturated, $k:=\lz(w)\ge1$ (every word with $k=0$ is
saturated) and there is an index $r\in\{0,\dots,k\}$ with 
$$a_r\le k-1,$$
where $(a_0,\dots,a_k)$ is the block profile of $w$ as in \eqref{eq:profile}.
We distinguish two cases.

\smallskip
\emph{Case 1: $\lo(w)\ge k-1$.}
Define a profile $(c_0,\dots,c_k)$ as follows: let $c_r=a_r$, and now for
$i\in\{0,1,\dots,k\}\setminus\{ r\}$ choose any integers $0\le c_i\le a_i$
for which we have
\[
\sum_{i\ne r}c_i\;=\;k-1-a_r .
\]
Such a choice exists because $0\le k-1-a_r$ (by $a_r\le k-1$) and
$ k-1-a_r\leq \lo(w)-a_r=\sum\limits_{i\ne r}a_i$ (by the assumption:
$\lo(w)\ge k-1$ ). Let $v$ be the word with $k$ zeros and
block profile $(c_0,\dots,c_k)$; then $\lz(v)=k$, $\lo(v)=\sum_i c_i=k-1$,
and $v\sub w$ by point (2) of Observation \ref{greedy}. Now insert a digit
$1$ into the $r$-th block of ones in $v$ ( insert $1$ in slot $0$ if $r=0$,
or, if $r\ge1$, insert $1$ immediately after the $r$-th zero in $v$).
The resulting word $v'$ has $k$ zeros and $k$ ones, and has
block profile $(c_0,\dots,c_{r-1},\,c_r+1,\,c_{r+1},\dots,c_k)$ with
$c_r+1=a_r+1>a_r$, so $v'\nsub w$ by point (2) of Observation \ref{greedy}.
Here, $\lz(v)=k$, and since $\lo(w)\ge k-1$ we have
$k=\min(\lz(w),\lo(w)+1)=:m^{\ast}$.

\smallskip
\emph{Case 2: $\lo(w)\le k-2$.}
Put $m^\star=\lo(w)+1\le k-1$; then $m^\star=\min(\lz(w),\lo(w)+1)$. Let $v$ be the subword of $w$ created by
deleting arbitrary $(k-m^\star)$ zeros from the word $w$: we save all the
ones of $w$ and exactly $m^\star$ zeros; then $v\sub w$,
$\lz(v)=m^\star$ and $\lo(v)=m^\star-1$. Now, it is enough to note that every word $v'$
obtained from $v$ by inserting a digit $1$ to any slot satisfies
$\lo(v')=m^\star>\lo(w)$, and hence $v'\nsub w$.
\end{proof}

By Lemma \ref{lem:deficiency} and Corollary \ref{star},

\begin{corollary}\label{cor:main-ii}
If $w \in \{0,1\}^{\star}$ is not saturated, then
\[
P_{w}(\N) \;\ge\; \frac12 \;+\; 2^{-2\min(\lz(w),\,\lo(w)+1)-1}.
\]
\end{corollary}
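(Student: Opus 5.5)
The statement is an immediate combination of two earlier results, so the plan is simply to chain them.

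First, fix a non-saturated $w$ and put $m^{\ast}=\min(\lz(w),\lo(w)+1)$. Lemma~\ref{lem:deficiency} supplies a subword $v\sub w$ with $\lz(v)=m^{\ast}$ and $\lo(v)=m^{\ast}-1$, together with a slot $q\in\{0,\dots,2m^{\ast}-1\}$ such that $\iota^{1}_{q}(v)\nsub w$. Such a $v$ has length $2m^{\ast}-1$ and balance $-1$, so $v\in B^{-1}_{2m^{\ast}-1}(w)$. Hence $v\in D_{m^{\ast}}(w)$, and therefore $d_{m^{\ast}}(w)\ge1$.

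Next, apply Corollary~\ref{star} with $m=m^{\ast}$; equivalently, use \eqref{eq:cumulative} and keep only the term $m=m^{\ast}$. All other summands are nonnegative, and $\lceil d_{m^{\ast}}(w)/m^{\ast}\rceil\ge1$. This gives
\[
P_w(\N)\ge\frac12+2^{-2m^{\ast}-1},
\]
which is exactly the claimed bound. The only point needing care is that Corollary~\ref{star} is written with $P_t$, and here $t=(1w1)_2$, so $P_t=P_w$.

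There is no real obstacle: all the combinatorial work is in Lemma~\ref{lem:deficiency}, which is already established.
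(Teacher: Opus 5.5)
Your proof is correct and is exactly the paper's argument: the paper also obtains the corollary by feeding the pair $(v,q)$ from Lemma~\ref{lem:deficiency} into Corollary~\ref{star} with $m=m^{\ast}$, which is the same as keeping the $m=m^{\ast}$ term of \eqref{eq:cumulative}. Your explicit checks that $v\in B^{-1}_{2m^{\ast}-1}(w)$, hence $d_{m^{\ast}}(w)\ge1$, and that $P_t=P_w$ for $t=(1w1)_2$, just spell out what the paper leaves implicit.
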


%\begin{observation}[Pinning]\label{lem:pinning}

%\end{observation}

%\begin{proof}
%In Lemma~\ref{lem:embed} with $j=k$, the only choice of indices with
%$1\le i_1<\dots<i_k\le k$ is $i_s=s$ for all $s$, and then each sum in
%\eqref{eq:capacity} reduces to a single term:
%$\sum_{i<i_1}a_i=a_0$, $\sum_{s\le i<s+1}a_i=a_s$, and
%$\sum_{i\ge k}a_i=a_k$.
%\end{proof}

%\begin{proof}
%Let $j=\lz(u)\le k$ and choose $i_s=s$ for $1\le s\le j$ in
%Lemma~\ref{lem:embed}. Every left-hand side in \eqref{eq:capacity} is at most
%$\lo(u)\le k$, while each right-hand side is at least one of the $a_i\ge k$;
%for the last inequality note $\sum_{i\ge j}a_i\ge a_j\ge k$. Hence
%\eqref{eq:capacity} holds and $u\sub w$.
%\end{proof}

\begin{proof}[Proof of Theorem \textup{\ref{main}}]

Assume that $w\in\{0,1\}^{\star}$ has $k=\lz(w)$ zeros and
block profile $(a_0,\dots,a_k)$ as in \eqref{eq:profile}. Note first that the word $w=\eps$
is saturated and the measure $P_{\eps}=P_3=\frac12\dd_{-1}+\frac12\dd_{1}$
satisfies $P_{\eps}(\N)=\frac12$. Thus, for greater clarity of the following
argument one may assume from now that $w\neq\eps$.

\medskip
\noindent\emph{Sufficiency.}
Let $w$ be saturated. By Corollary~\ref{cor:star}, it is enough to show
that any $v\sub w$ with $\lz(v)=\lo(v)+1$ is such that inserting a digit
$1$ into an arbitrary slot of $v$ produces a subword of $w$. If $k=0$ then
no word $v$ with $\lz(v)=\lo(v)+1\ge1$ is
a subword of $w$. Let $k\ge1$ and let $v\sub w$ satisfy
$\lz(v)=m$, $\lo(v)=m-1$ for some $m\le k$.
Any word $v'$
obtained from $v$ by inserting a single digit $1$ satisfies $\lz(v')=m\le k$ and $\lo(v')=m\le k$, hence $v'\sub w$ by point (1) of Observation~\ref{greedy}. By
Corollary~\ref{cor:star}, $P_{w}(\N)=\frac12$.

\medskip
\noindent\emph{Necessity.}
Let $w$ be non-saturated. Then, by Corollary~\ref{cor:main-ii},
\[
P_{w}(\N)\;\ge\;\frac12+2^{-2\min(\lz(w),\,\lo(w)+1)-1}\;>\;\frac12 .
\]
\end{proof}


\begin{thebibliography}{9}

\bibitem{Besineau}
J.~B\'esineau,
\emph{Ind\'ependance statistique d'ensembles li\'es \`a la fonction ``somme
des chiffres''},
Acta Arith. \textbf{20} (1972), 401--416.

\bibitem{Cheng}
K.~Cheng,
\emph{A first-exit proof of Cusick's sum-of-digits conjecture},
arXiv:2606.23398, 2026.

\bibitem{DKS}
M.~Drmota, M.~Kauers and L.~Spiegelhofer,
\emph{On a conjecture of Cusick concerning the sum of digits of $n$ and
$n+t$},
SIAM J. Discrete Math. \textbf{30} (2016), no.~2, 621--649.

\bibitem{EmmeHubert}
J.~Emme and P.~Hubert,
\emph{Central limit theorem for probability measures defined by sum-of-digits
function in base 2},
Ann. Sc. Norm. Super. Pisa Cl. Sci. (5) \textbf{19} (2019), no.~2, 757--780.

\bibitem{MS}
J.~F. Morgenbesser and L.~Spiegelhofer,
\emph{A reverse order property of correlation measures of the sum-of-digits
function},
Integers \textbf{12} (2012), Paper No. A47, 5 pp.

\bibitem{SS}
B.~Sobolewski and L.~Spiegelhofer,
\emph{Decomposing the sum-of-digits correlation measure},
J. Number Theory \textbf{280} (2026), 702--736.

\bibitem{Spiegelhofer}
L.~Spiegelhofer,
\emph{A lower bound for Cusick's conjecture on the digits of $n+t$},
Math. Proc. Cambridge Philos. Soc. \textbf{172} (2022), no.~1, 139--161.

\bibitem{SW}
L.~Spiegelhofer and M.~Wallner,
\emph{The binary digits of $n+t$},
Ann. Sc. Norm. Super. Pisa Cl. Sci. (5) \textbf{24} (2023), no.~1, 1--31.

\bibitem{Tarlowski}
D.~Tar{\l}owski,
\emph{On the sum-of-digits measures and Cusick's conjecture via stopped
random walks},
arXiv:2605.08624, 2026.

\end{thebibliography}
\end{document}